\author{Luis  Caffarelli and Luis Silvestre}
\title{The Evans-Krylov theorem for non local fully non linear equations}
\newlength{\hchng}
\newlength{\vchng}
\newtheorem{thm}{Theorem}[section]
\newtheorem{prop}[thm]{Proposition}
\newtheorem{cor}[thm]{Corollary}
\newtheorem{lemma}[thm]{Lemma}
\newtheorem{preremark}[thm]{Remark}
\newenvironment{remark}{\begin{preremark}\rm}{\medskip \end{preremark}}
\numberwithin{equation}{section}
\newcommand{\norm}[1]{\left\Vert#1\right\Vert}
\newcommand{\abs}[1]{\left\vert#1\right\vert}
\newcommand{\set}[1]{\left\{#1\right\}}
\newcommand{\R}{\mathbb R}
\newcommand{\eps}{\varepsilon}
\newcommand{\grad} {\nabla}
\newcommand{\lap} {\triangle}
\newcommand{\bdary} {\partial}
\newcommand{\dx} {\; \mathrm{d} x}
\newcommand{\dd} {\; \mathrm{d}}
\DeclareMathOperator{\supp}{supp}
\newcommand{\LI}{\mathcal{L}}
\newcommand{\si}{\delta}
\newcommand{\Mp} {\mathrm{M}^+}
\newcommand{\Mm} {\mathrm{M}^-}
\newcommand{\ind}{a}
\begin{document}
\maketitle

\begin{abstract}
We prove a regularity result for solutions of a purely integro-differential Bellman equation. This regularity is enough for the solutions to be understood in the classical sense. If we let the order of the equation approach two, we recover the theorem of Evans and Krylov about the regularity of solutions to concave uniformly elliptic partial differential equations.
\end{abstract}

\section{Introduction}
\label{s:main}
In 1982, L. Evans and N. Krylov proved independently (\cite{MR649348} and \cite{MR661144}) the following celebrated interior regularity result for elliptic partial differential equations: If $u$ is a bounded solution to $F(D^2 u)$ in $B_1$, where $F$ is uniformly elliptic and concave, then $u \in C^{2,\alpha}(B_{1/2})$ for some $\alpha > 0$. In this paper we prove a nonlocal version of that theorem. We prove that solutions to concave integro-differential equations of order $\sigma$ have regularity $C^{\sigma+\alpha}$ for some $\alpha>0$. This is enough regularity to consider the solutions to be classical.

The equations we study arise in stochastic control problems with jump processes (see for example \cite{MR1355243}, \cite{MR1424785}). In \cite{MR1355243} a $C^{2,\alpha}$ regularity of the solutions of Bellman equations for Levy processes is obtained, but the equation is required to have a uniformly elliptic second order part which is ultimately the source of the regularity. In \cite{abels2007aap} a purely integro-differential Bellman equation is studied. They only consider the case of the maximum of two linear operators. They obtain solutions in the fractional Sobolev space $H^{\sigma/2}$ up to the boundary and $H^\sigma$ in the interior of the domain. As they point out in their paper, the solutions to these equations are expected to be more regular in the interior of the domain.

In this paper we consider purely integro-differential equations and obtain an interior regularity result. Since we do not require our equations to have a second order part, our estimate comes only from the regularization effects of the integrals.

The constants in our estimates do not blow up as $\sigma \to 2$, so we can recover the usual Evans-Krylov theorem as a limit case. It is interesting to follow what the ideas of the proofs become as $\sigma \to 2$. Interestingly, the ideas we present in this paper provide a different proof of the Evans-Krylov theorem for second order elliptic equations.

We consider the equation
\begin{equation}
\label{e:main}
Iu(x) := \inf_{\ind \in \mathcal{A}} L_\ind u(x) = \inf_{\ind \in \mathcal{A}} \int_{\R^n} (u(x+y) + u(x-y) - 2u(x)) K_\ind (y) \dd y = 0
\end{equation}
As in \cite{CS}, we will choose each linear operator $L_\ind$ in some class $\LI$. Consequently, the operator $I$ will be elliptic with respect to $\LI$ in the sense described in \cite{CS2}.

We describe below the appropriate classes of linear operators that we will use in this paper.

We say that an operator $L$ belongs to $\LI_0$ if its corresponding kernel $K$ satisfies the uniform ellipticity assumption.
\begin{equation} \label{e:ellipticity}
(2-\sigma) \frac \lambda {|y|^{n+\sigma}} \leq K(y) \leq (2-\sigma) \frac \Lambda {|y|^{n+\sigma}}.
\end{equation}

The ellipticity assumption \eqref{e:ellipticity} is the essential assumption that leads to a local regularization. Our proofs, as usual, involve an improvement of oscillation of the solution to the equation (or an operator applied to it) in a decreasing sequence of balls around a point in the domain. Since the equations are nonlocal, every argument in our proofs will have to take into account the influence of the values of the solution at points outside those balls. We will often need to say that the part of the integral in \eqref{e:main} outside a neighborhood of the origin is a smooth enough function. That is why we define the following classes of smooth kernels.

We say that $L \in \LI_1$ if, in addition to \eqref{e:ellipticity}, the kernel $K$ is $C^1$ away from the origin and satisfies
\begin{equation}
\grad K_{\ind}(y) \leq \frac C {|y|^{n+1+\sigma}}.
\end{equation}

Finally, we say $L \in \LI_2$ if the kernel is $C^2$ away from the origin and satisfies
\begin{equation} \label{e:kernelc2}
D^2 K_{\ind}(y) \leq \frac C {|y|^{n+2+\sigma}}
\end{equation}

We consider the corresponding maximal operators
\begin{align*}
\Mp_0 u(x)&= \sup_{L \in \LI_0} Lu(x) = \int_{\R^n} \frac{\Lambda (\si u(x,y))^+ - \lambda (\si u(x,y))^-}{|y|^{n+\sigma}} \dd y\\
\Mp_1 u(x)&= \sup_{L \in \LI_1} Lu(x) \\
\Mp_2 u(x)&= \sup_{L \in \LI_2} Lu(x) \\
\end{align*}

Recall that we write $\si u(x,y) = (u(x+y)+u(x-y) - 2u(x))$ as in \cite{CS}. Naturally we have the inequalities $\Mp_0 u \geq \Mp_1 u \geq \Mp_2 u$. The minimal operators $\Mm$ are defined likewise.

We do not know a closed form for $\Mp_1$ or $\Mp2$. Since $\LI_2 \subset \LI_1 \subset \LI_0$, we have the relations $\Mp_2 u \leq \Mp_1 u \leq \Mp_0 u$ and $\Mp_2 u \geq \Mp_1 u \geq \Mp_0 u$.

Our main result states that under the hypothesis that all operators $L_\ind$ belong to $\LI_2$, the solutions are classical in the sense that there is enough regularity so that all integrals are well defined and H\"older continuous.

\begin{thm}
\label{t:main}
Assume every $L_\ind$ in \eqref{e:main} belongs to the class $\LI_2$. If $u$ is a bounded function in $\R^n$ such that $I u=0$ in $B_1$, then $u \in C^{\sigma+\alpha}(B_{1/2})$. Moreover
\begin{equation}
\norm{u}_{C^{\sigma+\alpha}(B_{1/2})} \leq C \norm{u}_{L^\infty(\R^n)}
\end{equation}
\end{thm}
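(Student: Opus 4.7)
The plan is to prove $C^{\sigma+\alpha}$ regularity by iteration at dyadic scales. For each $x_0\in B_{1/2}$ I would construct a polynomial $P_{x_0}$ of degree strictly less than $\sigma$ (a constant for $\sigma<1$, affine for $\sigma\geq 1$) with $I P_{x_0}\equiv 0$, such that $\|u-P_{x_0}\|_{L^\infty(B_r(x_0))}\leq C r^{\sigma+\alpha}\|u\|_{L^\infty(\R^n)}$. Each class $\LI_i$ is invariant under the rescaling $v(x)=\rho^{-(\sigma+\alpha)}u(\rho x)$, so the whole argument reduces to a single improvement-of-approximation lemma at unit scale: if $\|u\|_{L^\infty(\R^n)}\leq 1$ and $Iu=0$ in $B_1$, there exist $\rho\in(0,1)$ and a polynomial $P$ with $IP\equiv 0$ so that $\|u-P\|_{L^\infty(B_\rho)}\leq\rho^{\sigma+\alpha}$.

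The key nonlocal ingredient, replacing the classical Evans-Krylov step of differentiating $F(D^2u)=0$ twice in a fixed direction, is a concavity estimate on second-order increments. For fixed small $y$ set $w_y(x):=\si u(x,y)$. Since $I$ is translation invariant and concave in its argument (as an infimum of linear operators),
\begin{equation*}
I\!\bigl(u+\tfrac{1}{2}w_y\bigr)(x)=I\!\left(\tfrac{u(\cdot+y)+u(\cdot-y)}{2}\right)(x)\geq\tfrac{1}{2}Iu(\cdot+y)(x)+\tfrac{1}{2}Iu(\cdot-y)(x)=0
\end{equation*}
wherever $x\pm y\in B_1$, while the inequality $I(u+v)\leq Iu+\Mp_2 v$ (pick a near-minimizer of $L_a u$) gives $\Mp_2 w_y\geq 0$. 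Thus each $w_y$ is a $\Mp_2$-subsolution, and hence so is any upper envelope $M^+(x):=\sup_y\bigl(w_y(x)/|y|^\sigma\bigr)$, with supremum over $y$ in a dyadic annulus.

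The matching information needed for two-sided control comes from the equation itself. At each $x$ the minimizer $a^*(x)\in\A$ gives $L_{a^*(x)}u(x)=0$, i.e., the weighted integral $\int K_{a^*(x)}(y)\,w_y(x)\,dy$ vanishes. Combined with the lower bound $K_{a^*}\geq(2-\sigma)\lambda/|y|^{n+\sigma}$ from \eqref{e:ellipticity}, this forces the positive and negative parts of $y\mapsto w_y(x)/|y|^\sigma$ to balance in a weighted sense at every $x$, producing (in suitably averaged form, after handling the tail using the $C^2$-kernel bound \eqref{e:kernelc2}) a quantitative relation between $M^+$ and $M^-(x):=-\inf_y\bigl(w_y(x)/|y|^\sigma\bigr)$. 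This is the nonlocal analogue of the constraint $\sup_e u_{ee}\leq(\Lambda/\lambda)\bigl|\inf_e u_{ee}\bigr|$ that one reads off from $F(D^2u)=0$ and uniform ellipticity in the classical proof.

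Feeding the $\Mp_2$-subsolution property of $M^+$ together with this constraint into the nonlocal Krylov-Safonov machinery of \cite{CS} should yield an oscillation decay $\osc_{B_\rho}(w_y/|y|^\sigma)\leq(1-\theta)\osc_{B_1}(w_y/|y|^\sigma)+O(\text{tail})$ uniformly in $y$; reading off the approximating polynomial from the mean behaviour of $w_y$ at the base point then closes the improvement lemma. The main obstacle I expect is carrying out this last step with constants that stay bounded as $\sigma\to 2$, so that the usual Evans-Krylov theorem is recovered in the limit: this demands Hölder control uniform in $|y|$ and direction, careful tracking of the tail errors provided by \eqref{e:kernelc2}, and compatibility of the rescaling with the degenerating factor $(2-\sigma)$ in the ellipticity bound.
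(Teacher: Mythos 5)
Your proposal starts from the classical Evans--Krylov template -- each second-order incremental quotient $w_y$ is a subsolution, and a pointwise constraint between $M^+(x)=\sup_y w_y(x)/|y|^\sigma$ and $M^-(x)$ forces oscillation decay. This is precisely the route the paper says it cannot carry out ("the presently known proofs of this apriori estimate seem difficult to adapt to the nonlocal setting"), and the two places where your argument would stall are exactly where the paper's proof diverges from the classical one.

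\textbf{The constraint you invoke is the wrong one.} You propose to get two-sided control from $L_{a^*(x)}u(x)=0$ together with $\lambda\le K_{a^*}\le\Lambda$. That gives a balance between $\int(\si u(x,y))^{+}\frac{2-\sigma}{|y|^{n+\sigma}}\dd y$ and $\int(\si u(x,y))^{-}\frac{2-\sigma}{|y|^{n+\sigma}}\dd y$ \emph{at a single base point} $x$. Such a one-point statement says nothing about how $\si u(x,\cdot)$ differs from $\si u(0,\cdot)$, which is what H\"older continuity of $(-\lap)^{\sigma/2}u$ requires. The paper instead compares the translated solution $u_x(z)=u(x+z)$ with $u$: both solve $Iu=0$, so (because $I$ is an infimum) $\Mp_2(u_x-u)(0)\ge 0$ and $\Mm_2(u_x-u)(0)\le 0$. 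Writing $L(u_x-u)(0)=\int(\si u(x,y)-\si u(0,y))K(y)\dd y$ and estimating the tail by $C|x|$ yields Lemma~\ref{l:PandN}, the genuine two-point balance
\begin{equation*}
\tfrac{\lambda}{\Lambda}N(x)-C|x|\le P(x)\le\tfrac{\Lambda}{\lambda}N(x)+C|x|,
\end{equation*}
where $P$ and $N$ are the integrals of $(\si u(x,y)-\si u(0,y))^{\pm}$ against the reference kernel. Your one-point constraint cannot be upgraded to this.

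\textbf{Pointwise suprema over $y$ are not controlled by the equation.} Your $M^+(x)=\sup_y w_y(x)/|y|^\sigma$ is a sup over Dirac test objects; the equation only controls integrals of $w_y(x)$ against admissible kernels. In the second-order case this gap is invisible because $y\mapsto\si u(x,y)/|y|^2$ only ``remembers'' the $n$ eigenvalues of $D^2u(x)$, so sphere averages and extremal directions are comparable. No such finite-dimensional reduction exists here. The paper's replacement is to take $P(x)=\sup_{A}w_A(x)$ over \emph{measurable subsets} $A$, where $w_A(x)=b(x)\int_{B_{1/2}}(\si u(x,y)-\si u(0,y))K_A(y)\dd y$ and $K_A(y)=\frac{2-\sigma}{|y|^{n+\sigma}}\chi_A(y)$. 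This sup equals the integral of the positive part, each $w_A$ is still a $\Mp_2$-subsolution (Lemma~\ref{l:locallizedSubsolution}), and the Krylov--Safonov iteration (Lemma~\ref{ultimateLemma}) runs on $P$ and $N$ directly. You also omit the intermediate step Theorem~\ref{t:MabsBounded}, the uniform bound $\int|\si u(x,y)|\frac{2-\sigma}{|y|^{n+\sigma}}\dd y\le C$, which is needed to normalize the iteration at unit scale before the oscillation decay can start; and the polynomial degree you allow (strictly less than $\sigma$, hence affine for $\sigma>1$) is too low to encode $C^{\sigma+\alpha}$ regularity when $\sigma+\alpha>2$ -- the paper sidesteps polynomial approximation entirely by proving $(-\lap)^{\sigma/2}u\in C^\alpha$ and invoking the lift $C^\alpha\to C^{\sigma+\alpha}$.
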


The interested reader may verify, in following the arguments, that global boundedness of $u$ may be substituted by an appropriate moderated growth at infinity (see remark at the end of this paper).

For values of $\sigma$ less or equal to $1$, this theorem does not provide any improvement with respect to the $C^{1,\alpha}$ estimates in \cite{CS2}. Thus, for the purpose of proving Theorem \ref{t:main} we will assume $\sigma$ to be strictly larger than $1$ in this paper. The result becomes most interesting when $\sigma$ is close to $2$ and $\sigma+\alpha > 2$.

Note that the result of the theorem remains true if $I$ is convex instead of concave (a $\sup$ of linear operators instead of an $\inf$). Indeed, we can transform one situation in the other by considering the equation $-I(-u)=0$.

In previous papers (\cite{CS} and \cite{CS2}) we started developing the regularity theory for nonlocal equations. In \cite{CS} we obtained a nonlocal version of Krylov-Safonov theory with estimates that do not blow up as $\sigma \to 2$. This allowed us to obtain $C^{1,\alpha}$ estimates for general fully nonlinear integro-differential equations that are translation invariant. In \cite{CS2}, we extended those results to variable coefficient equations using perturbative methods. In this paper, we use the results in our previous two papers extensively.

\section{A regularization procedure}
\label{s:regularizationProcedure}

In this section we show a simple technique to approximate uniformly the solutions to the integro-differential equation \eqref{e:main} by $C^{2,\alpha}$ functions that solve an approximate equation with the same structure. This procedure works exclusively for integro-differential equations and cannot be done using only second order equations. It makes it unnecessary to use sup- or inf- convolutions and simplifies the technicalities of several proofs. Essentially the idea is that if we prove an estimate assuming the solutions $u$ is $C^{2,\alpha}$ (but the estimate does not depend on the $C^{2,\alpha}$ norm), then we can pass to the limit using this approximation technique to extend the estimate to all viscosity solutions. In this respect, the technicalities in the integro-differential setting simplify very much compared to the second order counterpart.

\begin{lemma} \label{l:approximationByC2}
Let $u$ be a continuous function in $\R^n$ solving \eqref{e:main} with every $L_\ind$ belongs to the class $\LI_2$ (resp. $\LI_1$ or $\LI_0$). There is a sequence of \emph{regularized} equations in the same class
\begin{align*}
 I^\eps u^\eps &= \inf_\ind L^\eps_\ind u^\eps = 0 && \text{in } B_1 \\
 u^\eps &= u && \text{in } \R^n \setminus B_1
\end{align*} 
so that the solutions $u^\eps$ are $C^{2,\alpha}$ in the interior of $B_1$ for every $\eps>0$ and $\lim_{\eps \to 0} u^\eps = u$ uniformly in $B_1$.
\end{lemma}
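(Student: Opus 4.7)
The plan is to regularize the equation by adding to each linear operator $L_\ind$ a small classical second-order piece that forces additional regularity of the solution without destroying the concave infimum structure or the uniform ellipticity. Concretely, I would set
$$L^\eps_\ind u(x) := L_\ind u(x) + \eps \lap u(x), \qquad I^\eps u := \inf_{\ind \in \A} L^\eps_\ind u,$$
interpreting $\eps \lap$ either as a genuine second-order operator or, to keep the integro-differential form, as the limit of integrations against a kernel of the shape $\eps c_{n,\tau}/|y|^{n+\tau}$ with $\tau<2$ chosen so close to $2$ that the limit fits into a mildly enlarged $\LI_2$. The operator $I^\eps$ is then still a concave, translation-invariant, uniformly elliptic operator, with constants controlled independently of $\eps$.

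Next I would solve the Dirichlet problem $I^\eps u^\eps = 0$ in $B_1$ with $u^\eps = u$ outside $B_1$ by Perron's method, as developed in \cite{CS2}, whose hypotheses apply to $I^\eps$. For the $C^{2,\alpha}$ regularity, the $\eps \lap$ term turns $I^\eps$ into a non-degenerate uniformly elliptic concave second-order operator plus an integro-differential correction of order $\sigma<2$. Using the interior $C^{1,\alpha}$ estimates of \cite{CS,CS2}, the integro-differential correction, viewed as a source term, is itself H\"older continuous, and the classical Evans-Krylov theorem for second-order concave equations then delivers $u^\eps \in C^{2,\alpha}(B_r)$ for every $r<1$.

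For the limit $\eps \to 0$, the maximum principle gives $\norm{u^\eps}_{L^\infty(\R^n)} \le \norm{u}_{L^\infty(\R^n)}$, while the interior $C^\alpha$ estimates of \cite{CS} apply to $I^\eps$ with constants uniform in $\eps$, so $\{u^\eps\}_{\eps>0}$ is equicontinuous on compact subsets of $B_1$. Extracting a uniformly convergent subsequence via Arzel\`a-Ascoli and using stability of viscosity solutions under uniform convergence, the limit $\tilde u$ solves $I\tilde u = 0$ in $B_1$ with exterior datum $u$, and the comparison principle forces $\tilde u = u$. Uniqueness of the limit upgrades subsequential to full convergence.

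The main obstacle is reconciling ``in the same class'' with the choice of regularization: a pure second-order operator $\eps \lap$ is not literally of the form $\int \si u(x,y)\, K(y)\,\dd y$ with $K$ satisfying \eqref{e:ellipticity}--\eqref{e:kernelc2}, so one must either interpret the class liberally, or realize $\eps \lap$ as a careful limit of genuine integro-differential operators of order $\tau<2$ with $\tau\to 2$, verifying that the $C^{2,\alpha}$ bounds for $u^\eps$ and the uniform-convergence argument are insensitive to this further approximation. Balancing a regularization strong enough to force $C^{2,\alpha}$ regularity against a regularization small enough (and of the right form) to vanish uniformly in the limit is the delicate, non-routine step of the argument.
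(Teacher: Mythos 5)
Your regularization is genuinely different from the paper's, and it has a real gap at the step where you claim $u^\eps \in C^{2,\alpha}$.

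The paper does not add a local second-order term. It modifies the kernels near the origin: it sets
\[
K^\eps_\ind(y) = \eta_\eps(y)\,\lambda\frac{2-\sigma}{|y|^{n+\sigma}} + (1-\eta_\eps(y))\,K_\ind(y),
\]
where $\eta_\eps$ is a cutoff equal to $1$ in $B_\eps$ and $0$ outside $B_{\eps}$ (up to a scale factor). Thus every $K^\eps_\ind$ coincides with the fixed fractional Laplacian kernel $\lambda(2-\sigma)/|y|^{n+\sigma}$ in a neighborhood of $y=0$, while the tail is unchanged. This keeps $L^\eps_\ind$ literally inside the original class $\LI_i$, verifies \eqref{e:ellipticity}--\eqref{e:kernelc2} with the same constants, and places $I^\eps$ precisely in the hypotheses of Theorem 6.6 of \cite{CS2}, which delivers $u^\eps\in C^{2,\alpha}$ directly. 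Uniform convergence $u^\eps\to u$ then follows because $|I^\eps v - Iv|\le CM\eps^{2-\sigma}$ for $v$ touched from above/below by a paraboloid, so $\|I^\eps - I\|\to 0$ and Lemma 4.9 of \cite{CS2} applies.

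Two steps in your argument do not go through as written. First, the operators $L^\eps_\ind u = L_\ind u + \eps\lap u$ are not in the class $\LI_i$ of the statement: $\eps\lap$ has no kernel representation obeying \eqref{e:ellipticity} for the given $\sigma<2$, and your fallback of realizing $\eps\lap$ as a limit of order-$\tau$ operators with $\tau\to 2$ does not fix this, since those kernels violate \eqref{e:ellipticity} at the original order $\sigma$. The ``same class'' requirement is not cosmetic: Lemma \ref{l:approximationByC2} is invoked later in the paper precisely so that the bounds \eqref{e:ellipticity}--\eqref{e:kernelc2} hold for the regularized operators with the same constants. Second, the claim that ``the integro-differential correction, viewed as a source term, is itself H\"older continuous, and the classical Evans-Krylov theorem \ldots delivers $u^\eps\in C^{2,\alpha}$'' is circular. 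With only $C^{1,\alpha}$ estimates in hand, the quantity $L_\ind u^\eps$ (order $\sigma>1$) is not even pointwise well defined, let alone $C^\alpha$; and since $\sigma$ may be arbitrarily close to $2$, the nonlocal part is not subordinate to $\eps\lap$, so it cannot be absorbed as a lower-order source. You would need a $C^{2,\alpha}$ theory for the full mixed second-order/integro-differential concave operator (essentially the result of \cite{MR1355243}), which is exactly the kind of input the paper's kernel-smoothing device is designed to replace by the purely nonlocal Theorem 6.6 of \cite{CS2}.
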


\begin{proof}
Let $\eta$ be a smooth function such that
\begin{align*}
0\leq \eta &\leq 1  \ \text{in } \R^n, \\
\eta &= 0  \ \text{in } \R^n \setminus B_1, \\
\eta &= 1  \ \text{in } B_1,
\end{align*}
and let $\eta_\eps(x) = \eta(x/\eps)$.

Let us consider the following regularized kernels 
\[ K_\ind^\eps (y) = \eta_\eps(x) \lambda \frac{2-\sigma}{|y|^{n+\sigma}} + (1-\eta_\eps(x)) K_\ind(y) \]
 
Correspondingly, we define
\begin{align*}
L^\eps_\ind v &= \int_{\R^n} \si v(x,y) K_\ind^\eps(y) \dd y \\
I^\eps v &= \inf_\ind L_\ind^\eps v
\end{align*}

Note that if $L_\ind \in \LI_i$ then also $L^\eps_\ind \in \LI_i$ for $i=0,1,2$.

Let $u^\eps$ be the solution of the following Dirichlet problem:
\begin{align*}
 I^\eps u^\eps &= \inf_\ind L^\eps_\ind u^\eps = 0 && \text{in } B_1 \\
 u^\eps &= u && \text{in } \R^n \setminus B_1
\end{align*} 

The solution $u^\eps$ to this problem is $C^{2,\alpha}$ by theorem 6.6 in \cite{CS2}.

It is clear that if $v \in C^2(x)$ and $|v(y) - v(x) - (y-x) \cdot \grad v(x)| \leq M |y-x|^2$ in $B_1$, then
\[ |I^\eps v (x) - I v(x)| \leq CM\eps^{2-\sigma} \]
so $\norm{I^\eps-I} \to 0$ as $\eps \to 0$ (recall $\sigma<2$), where the norm $\norm{I^\eps-I}$ is computed in the sense of definition 2.2 in \cite{CS2}. Then, by lemma 4.9 in \cite{CS2}, $u^\eps$ converges to $u$ uniformly in $\overline B_1$ as $\eps \to 0$.
\end{proof}

\begin{remark}
The concavity of $I$ is not used in Lemma \ref{l:approximationByC2}. The exact same idea works for equations of the type
\[ Iu(x) := \sup_b \inf_\ind L_{\ind b} u(x) = \sup_b \inf_\ind \int_{\R^n} (u(x+y) + u(x-y) - 2u(x)) K_{\ind b} (y) \dd y = 0 \]
\end{remark}


\section{Average of subsolutions is a subsolution}
\label{s:average}
The main ingredient in the Evans-Krylov theorem is the fact that concavity of the equation makes second order incremental quotients subsolutions of the linearized equation. In order to prove that, one first observes that an average of solutions to a concave equation is a subsolution to the same equation.

In this section we prove that also in the non local case the average of subsolutions to a concave equation is a subsolution of the same equation. This is obvious from the equation if the solutions are classical. For viscosity solutions we can prove it quickly using the approximation technique of section \ref{s:regularizationProcedure}.

\begin{prop} \label{p:averageIsSubsolution}
Let $u$ and $v$ be subsolutions of $Iu=0$ and $Iv=0$ in a domain $\Omega$, $u,v$ continuous in $\R^n$, then $I(u+v)/2 \geq 0$ in $\Omega$.
\end{prop}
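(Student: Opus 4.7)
My plan is to reduce to the case where $u$ and $v$ are $C^{2,\alpha}$, where the statement follows from a direct pointwise computation, and then pass to the limit using the regularization technique of Section \ref{s:regularizationProcedure}. The main work lies in the approximation step, since Lemma \ref{l:approximationByC2} as stated regularizes solutions, not subsolutions.

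In the smooth case, the inequality $Iu = \inf_\ind L_\ind u \geq 0$ is equivalent to $L_\ind u \geq 0$ for every $\ind \in \A$, and similarly $L_\ind v \geq 0$ for every $\ind$. By linearity of each $L_\ind$ one has $L_\ind((u+v)/2) = \tfrac12(L_\ind u + L_\ind v) \geq 0$, so taking the infimum gives $I((u+v)/2) \geq 0$. Equivalently, this is just the concavity of the operator $I$: $I((u+v)/2) \geq (Iu+Iv)/2$.

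For general continuous viscosity subsolutions, I would construct $C^{2,\alpha}$ classical subsolutions $u^\eps, v^\eps$ of a nearby equation $I^\eps w = 0$, with $u^\eps\to u$, $v^\eps\to v$ uniformly and $\norm{I^\eps - I} \to 0$ in the sense of Definition~2.2 of \cite{CS2}. Once this is achieved, the smooth case applied at each $\eps$ gives $I^\eps((u^\eps+v^\eps)/2) \geq 0$ in the classical sense, and the stability of viscosity subsolutions under uniform convergence (Lemma~4.9 of \cite{CS2}) yields $I((u+v)/2) \geq 0$ as a viscosity inequality.

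The main obstacle is producing the $u^\eps$ themselves. Solving the $I^\eps$-Dirichlet problem with boundary data $u$ exactly as in Lemma \ref{l:approximationByC2} only gives $u \leq u^\eps$ by comparison, with $u^\eps$ converging to the $I$-solution matching $u$ on the complement of $B_1$ rather than to $u$ itself. A natural fix is to first replace $u$ by a sup-convolution $u_\eta(x) = \sup_y \bigl(u(y) - |x-y|^2/\eta\bigr)$, which converges uniformly to $u$ and, by translation invariance of $I$ together with the standard fact that an upper semicontinuous supremum of viscosity subsolutions is again a subsolution, remains a subsolution of $Iw=0$ on a slightly smaller domain. One then smooths $u_\eta$ via the kernel regularization of Lemma \ref{l:approximationByC2} (exploiting that $u_\eta$ is already semiconvex) and passes to the limit diagonally in $\eta, \eps \to 0$.
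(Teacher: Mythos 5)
Your smooth-case argument coincides with the paper's: $I$ concave gives $I\bigl((u+v)/2\bigr)\geq (Iu+Iv)/2\geq 0$ for $C^2$ functions. The paper then regularizes by applying Lemma~\ref{l:approximationByC2} directly, obtaining $C^{2,\alpha}$ solutions $u^\eps,v^\eps$ of $I^\eps w=0$ converging uniformly to $u,v$, applying the smooth case, and closing by the stability result (Lemma~4.9 of \cite{CS2}). It does \emph{not} sup-convolve first; the convergence $u^\eps\to u$ in Lemma~\ref{l:approximationByC2} is what uses that $u$ \emph{solves}, not merely sub-solves, the equation.

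You are therefore right to flag the mismatch between the hypothesis (``subsolutions'') and the tool (Lemma~\ref{l:approximationByC2}, stated for solutions): for a strict subsolution, the Dirichlet regularization $I^\eps u^\eps=0$ in $B_1$, $u^\eps=u$ outside, has $u\leq u^\eps$ and converges to the $I$-solution with data $u$, not to $u$. This gap is present in the paper's own proof as written. It is harmless in practice because Proposition~\ref{p:averageIsSubsolution} is only invoked (via Proposition~\ref{p:mollified}, Lemmas~\ref{l:truncatedLisSubsolution} and \ref{l:fullLisSubsolution}) on translates and convolution-averages of an actual solution, and these are again solutions by translation invariance.

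Your sup-convolution idea is the right repair if one wants the literal subsolution statement, but the final step, ``smooths $u_\eta$ via the kernel regularization of Lemma~\ref{l:approximationByC2},'' is not well defined: that lemma does not smooth a given function, it solves a regularized Dirichlet problem, which would re-create the very issue you identified. What semiconvexity of $u_\eta$ actually buys is that $\si u_\eta(x,y)\geq -C_\eta|y|^2$ near $y=0$, so with boundedness each $L_\ind u_\eta(x)$ is a well-defined finite integral at \emph{every} $x$; you must then still argue that this pointwise value is $\geq 0$. That step requires a nonlocal Jensen--Ishii-type argument (touch $u_\eta$ from above by a $C^2$ function, transport the test function to the magic point of the sup-convolution, and use the viscosity inequality for $u$), not another Dirichlet regularization. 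With that substitution your route does yield the full subsolution statement, which is a bit more than the paper itself establishes or needs.
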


\begin{proof}
The proposition is obvious if $u,v \in C^2$ by the concavity of $I$. So we used the regularization procedure described in section \ref{s:regularizationProcedure}.

Let $I^\eps u^\eps = 0$ and $I^\eps v^\eps = 0$ be the approximate equations of Lemma \ref{l:approximationByC2}. The functions $u^\eps$ and $v^\eps$ are $C^2$ so $I^\eps (u^\eps+v^\eps)/2 \geq 0$ in $\Omega$. Since $u^\eps \to u$ and $v^\eps \to v$ uniformly in $\Omega$ and $I^\eps \to I$, then $I(u+v)/2 \geq 0$ in $\Omega$ by Lemma 4.9 in \cite{CS2}.
\end{proof}

The same idea shows that any average of solutions is a subsolution. In particular we have the following proposition.

\begin{prop} \label{p:mollified}
Let $u$ be a solution of $Iu = 0$ in $B_1$ and $\eta$ be a mollifier, i.e.
\begin{enumerate}
\item $\eta \geq 0$
\item $\int \eta = 1$.
\item $\supp \eta \subset B_\delta$.
\end{enumerate}
we have $I(\eta \ast u) \geq 0$ in $B_{1-\delta}$.
\end{prop}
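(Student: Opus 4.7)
The plan is to realize $\eta \ast u$ as a convex combination of translates of $u$ and then iteratively apply Proposition~\ref{p:averageIsSubsolution} together with the stability of viscosity subsolutions. The key observation is that $I$ is translation invariant: each kernel $K_\ind(y)$ depends only on $y$, so for every $z \in B_\delta$, the translate $u_z(x) := u(x-z)$ is a viscosity solution of $Iu_z = 0$ on $z + B_1 \supset B_{1-\delta}$. Consequently
\[ (\eta \ast u)(x) = \int_{B_\delta} \eta(z)\, u_z(x)\, \dd z \]
represents $\eta \ast u$ as a continuous convex combination of viscosity subsolutions on $B_{1-\delta}$.

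To pass from this continuous representation to the pointwise inequality, I would discretize the integral by Riemann sums. Partition $B_\delta$ into cells $\{Q_j\}_j$ of mesh $h$, choose $z_j \in Q_j$, set $c_j := \eta(z_j)|Q_j|$ and normalize so that $\sum_j c_j = 1$, and define $w_h := \sum_j c_j u_{z_j}$. Because $u$ is continuous and bounded on $\R^n$ and $\eta$ is continuous with compact support in $B_\delta$, the functions $w_h$ converge to $\eta \ast u$ locally uniformly in $\R^n$ as $h \to 0$, with the uniform $L^\infty$ bound $\|w_h\|_{L^\infty(\R^n)} \leq \|u\|_{L^\infty(\R^n)}$.

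The main step---and the main (mild) obstacle---is to extend Proposition~\ref{p:averageIsSubsolution} from two-term averages to the finite convex combinations $w_h$. Iterating the proposition on dyadic pairings shows that any equal-weight average $\tfrac{1}{2^k}(v_1 + \cdots + v_{2^k})$ of viscosity subsolutions is again a viscosity subsolution; allowing the $v_i$ to repeat extends this to any convex combination with dyadic rational weights summing to $1$. Approximating the real weights $c_j$ by such dyadic rationals yields uniform convergence of the corresponding combinations to $w_h$, and the stability of viscosity subsolutions under uniform convergence (Lemma~4.9 of \cite{CS2}) then gives $Iw_h \geq 0$ in $B_{1-\delta}$. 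A final application of this same stability as $h \to 0$ transfers the subsolution property to $\eta \ast u$, yielding $I(\eta \ast u) \geq 0$ in $B_{1-\delta}$.
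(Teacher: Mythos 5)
Your argument is correct but takes a genuinely different route from the paper's. The paper proves Proposition~\ref{p:mollified} by ``the same idea'' as Proposition~\ref{p:averageIsSubsolution}: regularize $u$ to a $C^2$ solution $u^\eps$ of the approximate equation $I^\eps u^\eps = 0$ (Lemma~\ref{l:approximationByC2}), observe that for $C^2$ functions and translation-invariant linear $L^\eps_\ind$ one has the Jensen-type inequality
\[ I^\eps(\eta \ast u^\eps)(x) = \inf_\ind \int \eta(z)\, L^\eps_\ind u^\eps(x-z)\, \dd z \geq \int \eta(z)\, I^\eps u^\eps(x-z)\, \dd z = 0, \]
and then pass to the limit in $\eps$ via Lemma~4.9 of \cite{CS2}. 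You instead treat Proposition~\ref{p:averageIsSubsolution} as a black box and never revisit the regularization machinery: translation invariance puts all translates $u_z$ on the common domain $B_{1-\delta}$, dyadic iteration of the two-function averaging result handles dyadic-rational weights, and two successive stability arguments (one for the weights, one for the Riemann sums) pass to the continuous average. This is logically cleaner in that $p:mollified$ becomes a genuine corollary of $p:averageIsSubsolution$, but it costs extra limit passes; in particular, in the final step $w_h \to \eta\ast u$ is only locally uniform (since $u$ is not assumed uniformly continuous on $\R^n$), so you should note that the uniform $L^\infty$ bound plus dominated convergence controls the nonlocal tails, which is what the stability lemma actually needs --- otherwise, the argument is complete.
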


\section{The linear theory of integro-differential equations}
\label{s:linearTheory}

In this section we present some regularity theorems for linear integro-differential equations with constant coefficients. Naturally in this simple case, we can easily obtain more powerful results than for the nonlinear case. The results we present in this section are just the ones that we will need in the rest of this paper.

\begin{thm} \label{t:l1}
Let $L$ be an integro-differential operator in the class $\LI_1$ with $\sigma \geq \sigma_0 >1$. Suppose that $u$ is an integrable function in the weighted space $L^1(\R^n,\frac{1}{1+|y|^\sigma})$ that solves the equation $L u = 0$ in $B_1$, then $u \in C^{2,\alpha}(B_{1/2})$ and we have the estimates
\[ \norm{u}_{C^{2,\alpha}(B_{1/2})} \leq C_k \norm{u}_{L^1(\R^n,\frac{1}{1+|y|^\sigma})} \]
The value of the constant $C$ and $\alpha$ depends on $n$, $\lambda$, $\Lambda$ and $\sigma_0$ but not on $\sigma$.
\end{thm}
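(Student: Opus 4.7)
The plan is to bootstrap the $C^{1,\alpha}$ interior estimate already available for linear equations in \cite{CS2} (which applies to the larger class $\LI_0$ with constants uniform in $\sigma\ge\sigma_0$) up to the claimed $C^{2,\alpha}$ bound, using the translation invariance of $L$. For a unit vector $e$ and small $h>0$, the incremental quotient $v_h(x):=(u(x+he)-u(x))/h$ again satisfies $Lv_h=0$ on $B_{3/4-h}$. If I could apply the $C^{1,\alpha}$ estimate to $v_h$ with a bound independent of $h$, then letting $h\to 0$ and varying $e$ would yield $u\in C^{2,\alpha}(B_{1/2})$. The obstruction is that $v_h$ is a difference quotient divided by $h$, so far from $B_1$ it carries an apparent factor $1/h$ that blows up in any global norm one uses to feed $v_h$ into the $C^{1,\alpha}$ estimate.

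To get around this I would split $v_h=v_h^{\mathrm{in}}+v_h^{\mathrm{out}}$ where $v_h^{\mathrm{in}}=v_h$ on $B_{5/8}$ and vanishes elsewhere. A first application of the $C^{1,\alpha}$ estimate of \cite{CS2} to $u$ itself gives $\|u\|_{C^{1,\alpha}(B_{3/4})}\le C\|u\|_{L^1(w)}$ with $w(y)=(1+|y|^\sigma)^{-1}$, so $v_h^{\mathrm{in}}\in L^\infty(\R^n)$ with norm bounded by $C\|u\|_{L^1(w)}$ uniformly in $h$. Since $Lv_h=0$ on $B_{3/4-h}$ and $v_h^{\mathrm{out}}$ vanishes on $B_{5/8}$, for $x\in B_{1/2}$ one has $Lv_h^{\mathrm{in}}(x)=-Lv_h^{\mathrm{out}}(x)$, and only $|y|\ge 1/8$ contributes to the latter. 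The key computation is to change variables $z=x+y$ and then shift the $u(z+he)$ piece by $-he$, moving the difference quotient from $u$ onto the kernel: up to a boundary-layer term supported in an $O(h)$-wide annulus near $\bdary B_{5/8}$,
\[ \int v_h^{\mathrm{out}}(x+y)\,K(y)\dd y = \frac{1}{h}\int u(z)\,\bigl(K(z-x-he)-K(z-x)\bigr)\mathbf{1}_{|z|>5/8}\dd z. \]
The kernel difference is bounded by $Ch\,|\grad K|\le Ch/|z-x|^{n+1+\sigma}$ by the $\LI_1$ hypothesis, the $1/h$ factor cancels, and for $x\in B_{1/2}$ with $|z-x|\ge 1/8$ one has $|z-x|^{-n-1-\sigma}\lesssim(1+|z|^\sigma)^{-1}$, so the main term is bounded by $C\|u\|_{L^1(w)}$. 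The boundary-layer contribution is controlled by the $L^\infty$ bound on $u$ from the first step times the $O(h)$ measure of the annulus, and the symmetric $v_h^{\mathrm{out}}(x-y)$ term is identical. Hence $\|Lv_h^{\mathrm{out}}\|_{L^\infty(B_{1/2})}\le C\|u\|_{L^1(w)}$ uniformly in $h$.

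A second application of the $C^{1,\alpha}$ estimate of \cite{CS2} to $v_h^{\mathrm{in}}$, now regarded as the solution of $Lv_h^{\mathrm{in}}=g_h$ on $B_{1/2}$ with both $\|g_h\|_{L^\infty(B_{1/2})}$ and $\|v_h^{\mathrm{in}}\|_{L^1(w)}$ controlled by $C\|u\|_{L^1(w)}$, will then yield $\|v_h\|_{C^{1,\alpha}(B_{3/8})}\le C\|u\|_{L^1(w)}$ uniformly in $h$. Sending $h\to 0$ for each $e$ and a short covering/rescaling step then give $u\in C^{2,\alpha}(B_{1/2})$ with the claimed bound. The main obstacle is the transfer-to-kernel computation displayed above: this is the only place where $\LI_1$ enters rather than $\LI_0$, and it is what lets the apparent $1/h$ growth at infinity of $v_h$ cancel against the $h$-small difference of a $C^1$ kernel. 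A secondary point to watch is $\sigma$-independence of constants as $\sigma\to 2$; this is automatic from the $(2-\sigma)$ normalization in the ellipticity assumption combined with the $\sigma$-uniform $C^{1,\alpha}$ theory of \cite{CS2}, provided the boundary-layer term is tracked carefully.
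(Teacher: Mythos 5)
Your proposal is correct and takes essentially the same route as the paper: both bootstrap the $C^{1,\alpha}$ estimate of \cite{CS2} by differentiating the equation in a direction $e$, cutting off outside a smaller ball, and transferring the derivative from $u$ onto the kernel via the $\LI_1$ gradient bound $|\grad K|\leq C|y|^{-n-1-\sigma}$ — the paper's \emph{integration by parts} trick is precisely your transfer-to-kernel computation, with your $O(h)$ boundary-layer term playing the role of the $\eta_e K$ term produced by the paper's smooth cutoff. The only difference is cosmetic: you work with discrete difference quotients $v_h$ and a sharp cutoff, whereas the paper works directly with $u_e$ and a smooth cutoff $\eta$, then invokes Theorem 6.1 of \cite{CS2} exactly as you do in your second application of the $C^{1,\alpha}$ estimate.
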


\begin{proof}
We will prove the apriori estimate. The regularity estimate for a weak or viscosity solution follows by mollifying the solution or using the regularization procedure of the previous section.

First we apply theorem 2.8 in \cite{CS2} to obtain that $u \in C^{1,\alpha}(B_{3/4})$ and obtain the estimate
\[ ||u||_{C^1(B_{3/4})} \leq C \norm{u}_{L^1(\R^n,\frac{1}{1+|y|^\sigma})}. \]

The idea is to apply the same $C^{1,\alpha}$ estimate to every directional derivative $u_e$. Since we do not have an $L^\infty$ estimate of $u_e$ outside of $B_{3/4}$ we have to use our usual \emph{integration by parts} trick. We know that
\[ \int_{\R^n} u_e(y) K(x+y) \dd y = 0\]

Let $\eta$ be a smooth cutoff function such that
\begin{align*}
0 &\leq \eta \leq 1  && \text{in } \R^n, \\
\eta &= 0 && \text{outside } B_{3/4}, \\
\eta &= 1 && \text{in } B_{5/8}.
\end{align*}
 
We compute 
\begin{align*}
 \abs{\int_{\R^n} u_e(y) \eta(y) K(x+y) \dd y} &=  \abs{\int_{\R^n} u_e(y) (\eta(y)-1) K(x+y) \dd y} \\
 &=  \abs{\int_{\R^n} u_e(y) (\eta_e(y) K(x+y) + (\eta(y)-1) K_e(x+y)) \dd y} \\
 &\leq C \norm{u}_{L^1(\R^n,\frac{1}{1+|y|^\sigma})}.
\end{align*}

Thus we can apply Theorem 6.1 in \cite{CS2} to conclude that $u_e \in C^{1,\alpha}$ for every direction $e$ and thus $u \in C^{2,\alpha}$ (Note that we are using $\sigma>1$ here).
\end{proof}

In order to have better interior regularity estimates than $C^{2,\alpha}$, we would need to impose more regularity to the kernel $K$ in $L$ than $C^1$ away from the origin.

Next theorem says that in $L^2$ all linear operators have a comparable norm.

\begin{thm} \label{t:l2}
Let $L_0$ and $L_1$ be two linear integro-differential operators in the class $\LI_0$. Suppose that $L_0 u \in L^2(\R^n)$ then $L_1 u \in L^2(\R^n)$.
\end{thm}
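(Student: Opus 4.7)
The natural plan is to diagonalize both $L_0$ and $L_1$ simultaneously via the Fourier transform and compare their symbols. Each operator $L$ in $\LI_0$ is a Fourier multiplier: for Schwartz $v$,
\[
\widehat{Lv}(\xi) = m(\xi)\,\hat v(\xi), \qquad m(\xi) = 2 \int_{\R^n} \bigl(\cos(y\cdot\xi)-1\bigr) K(y)\dd y.
\]
Note $m(\xi)\le 0$. Using the two-sided bound \eqref{e:ellipticity} on $K$ together with the elementary identity $\int_{\R^n}(1-\cos(y\cdot\xi))|y|^{-n-\sigma}\dd y = c_n(\sigma)|\xi|^\sigma$ and the fact that $(2-\sigma)c_n(\sigma)$ stays bounded between positive constants for $\sigma\in[\sigma_0,2)$, I would verify that there exist constants $c=c(n,\lambda)$ and $C=C(n,\Lambda)$ with
\[
c\,|\xi|^\sigma \;\leq\; -m(\xi) \;\leq\; C\,|\xi|^\sigma \qquad \text{for all } \xi\in\R^n,
\]
uniformly over $L\in\LI_0$.

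With this in hand the proof is a one-liner in Fourier variables. Writing $m_0,m_1$ for the symbols of $L_0,L_1$, the pointwise ratio $r(\xi):=m_1(\xi)/m_0(\xi)$ is a bounded measurable function on $\R^n\setminus\{0\}$ with $\|r\|_{L^\infty}\le (C\Lambda)/(c\lambda)$. Under the hypothesis $L_0 u\in L^2(\R^n)$, the tempered distribution $m_0\hat u = \widehat{L_0 u}$ is in $L^2$ by Plancherel. Since $r$ is bounded, $m_1\hat u = r\cdot(m_0\hat u)$ lies in $L^2$ as well. Taking inverse Fourier transform gives $L_1 u\in L^2(\R^n)$, together with the quantitative bound
\[
\norm{L_1 u}_{L^2(\R^n)} \;\leq\; \frac{C\Lambda}{c\lambda}\, \norm{L_0 u}_{L^2(\R^n)}.
\]

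The only technical care needed is the interpretation of $\hat u$: $u$ itself is not assumed to be in any particular space, but for $L_0 u$ to be defined we may assume $u$ lies in the weighted $L^1$ space which makes it a tempered distribution; then all of the above identities hold in $\mathcal{S}'(\R^n)$ and the final equality $m_1\hat u = \widehat{L_1 u}$ justifies the conclusion. The main (and really only) obstacle is the two-sided symbol bound; this is where the uniform ellipticity \eqref{e:ellipticity} together with the normalizing factor $(2-\sigma)$ in the class $\LI_0$ pays off, ensuring that the constants in the bound, and hence the operator norm constant $(C\Lambda)/(c\lambda)$, do not degenerate as $\sigma\to 2$.
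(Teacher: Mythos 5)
Your proposal is correct and takes essentially the same route as the paper: Fourier transform, show the symbol of each $L\in\LI_0$ is pinched between two multiples of $|\xi|^\sigma$, and conclude that $L_1L_0^{-1}$ is a bounded Fourier multiplier on $L^2$. The only cosmetic difference is that you obtain the two-sided symbol bound by invoking the known identity for the fractional Laplacian symbol and the asymptotics of its normalizing constant as $\sigma\to 2$, whereas the paper derives the same bounds directly by splitting the defining integral at a scale $R$ and optimizing $R=|\xi|^{-1}$ for the upper bound and restricting to $B_{|\xi|^{-1}/2}$ for the lower bound.
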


\begin{proof}
Since we are dealing with $L^2$ norms, and translation invariant linear operators, we will use the Fourier transform to prove this theorem.

Given a function $u$ and $y \in \R^n$, we have $\widehat{\si u(x,y)} = (u(.+y)+u(.-y)-2u)\widehat{\phantom{.}} = (e^{i y \cdot \xi} + e^{-i y \cdot \xi} - 2) \hat u(\xi) = 2(\cos(y \cdot \xi)-1) \hat u(\xi)$. We use this identity to compute the symbol $s(\xi)$ of an operator $-L$ as a pseudo differential operator.

\begin{align*}
-\widehat{Lu}(\xi) &= \left( \int_{\R^n} \si u(x,y) K(y) \dd y \right)\widehat{\phantom{.}} (\xi) \\
&= \left( \int_{\R^n} 2(1-\cos(y \cdot \xi)) K(y) \dd y  \right)\hat u (\xi) =: s(\xi) \hat u(\xi)
\end{align*}

Note that for every $\xi$ function $(1-\cos(y \cdot \xi))$ is $C^2$ and bounded, so the integral in the right hand side is well defined. Let us estimate it from above and below.

For any $R>0$,
\begin{align*}
s(\xi) = \int_{\R^n} 2(1-\cos(y \cdot \xi)) K(y) \dd y &\leq \int_{B_R} 2 |y \cdot \xi|^2 (2-\sigma) \frac{\Lambda}{|y|^{n+\sigma}} \dd y + \int_{\R^n \setminus B_R} 2 (2-\sigma) \frac{\Lambda}{|y|^{n+\sigma}} \dd y \\
&\leq C |\xi|^2 R^{2-\sigma} + C \frac{(2-\sigma)}{\sigma} R^{-\sigma}
\end{align*}
so we obtain $s(\xi) \leq C |\xi|^\sigma$ by choosing $R = |\xi|^{-1}$.

On the other hand note that $(1-\cos(y \cdot \xi))$ is nonnegative and so is $K(y)$. So the integrand is nonnegative and we have
\begin{align*}
s(\xi) = \int_{\R^n} 2(1-\cos(y \cdot \xi)) K(y) \dd y &\geq \int_{B_{|\xi|^{-1}/2}} \frac{1}{4}|y \cdot \xi|^2 (2-\sigma) \frac{\lambda}{|y|^{n+\sigma}} \dd y \\
&\geq c |\xi|^{-\sigma}
\end{align*}
So the symbol $s(\xi)$ is comparable to $|\xi|^{-\sigma}$ for any operator $L$ in $\LI_0$. Thus by classical Fourier analysis we have that $L_1 L_0^{-1}$ has a bounded symbol and maps $L^2$ functions into $L^2$.
\end{proof}

The following theorem is a direct combination of Theorems \ref{t:l1} and \ref{t:l2}.

\begin{thm} \label{t:linearL2estimate-local}
Let $L$ be an integro-differential operator in the class $\LI_1$ with $\sigma \geq \sigma_0 >1$. Suppose that $u$ is a function in the weighted space $L^1(\R^n,\frac{1}{1+|y|^\sigma})$ that solves the equation $L u = f$ in $B_1$ for some $f \in L^2$. Let $L_1$ be an operator in $\LI_0$, then $L_1 u \in L^2(B_{1/2})$ and 
\[\norm{u}_{L^2(B_{1/2})} \leq C \left( \norm{u}_{L^1(\R^n,\frac{1}{1+|y|^\sigma}) + \norm{f}_{L^2(B_1)}} \right) \]
for some constant $C$ depending on $n$, $\lambda$, $\Lambda$ and $\sigma_0$.
\end{thm}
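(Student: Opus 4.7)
The plan is to write $u = v + w$, where $w$ is a particular solution of $L w = f$ on all of $\R^n$ (analyzed via Theorem~\ref{t:l2}) and $v := u - w$ satisfies the homogeneous equation $L v = 0$ in $B_1$ (analyzed via Theorem~\ref{t:l1}). This cleanly decouples the two mechanisms: the particular solution carries the $L^2$ datum $f$, and the homogeneous part carries the boundary/tail information from $u$.

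First I would construct $w$ by extending $f$ by zero outside $B_1$, so that $f \in L^2(\R^n)$ is compactly supported, and then inverting $L$ on the Fourier side: $\widehat w(\xi) := \widehat f(\xi)/s(\xi)$, where $s(\xi)$ is the symbol of $L$ computed in the proof of Theorem~\ref{t:l2}. Since $s(\xi)$ is comparable to $|\xi|^\sigma$, the operator $f \mapsto w$ behaves like a Riesz potential of order $\sigma$. Two estimates then follow. The bound
\[ \norm{L_1 w}_{L^2(\R^n)} \leq C \norm{f}_{L^2(B_1)} \]
is immediate from Plancherel, since $\widehat{L_1 w}(\xi) = (s_1(\xi)/s(\xi)) \widehat f(\xi)$ and $s_1/s$ is bounded (this is the content of Theorem~\ref{t:l2}). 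The weighted bound
\[ \norm{w}_{L^1(\R^n,\frac{1}{1+|y|^\sigma})} \leq C \norm{f}_{L^2(B_1)} \]
follows from standard Riesz-potential estimates: $w$ is locally in an appropriate $L^p$ (Hardy--Littlewood--Sobolev applied to $L^2(B_1)$ data), and at infinity $w$ decays like $|x|^{-(n-\sigma)}$.

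Next, the homogeneous part: $v := u - w$ satisfies $L v = 0$ in $B_1$ and $\norm{v}_{L^1(\R^n,\frac{1}{1+|y|^\sigma})} \leq C (\norm{u}_{L^1(\R^n,\frac{1}{1+|y|^\sigma})} + \norm{f}_{L^2(B_1)})$. Theorem~\ref{t:l1}, combined with its scale-invariant covering form in $B_1$, yields $v \in C^{2,\alpha}$ on any compact subdomain of $B_1$ (say $B_{7/8}$) with the same bound. To control $L_1 v$ on $B_{1/2}$ I split, for each $x \in B_{1/2}$, the defining integral as $\int_{|y|<1/4} + \int_{|y|\geq 1/4}$: on the near piece, Taylor gives $\abs{\si v(x,y)} \leq C \norm{D^2 v}_{L^\infty(B_{3/4})}|y|^2$ against $K_1(y) \leq C|y|^{-n-\sigma}$ (integrable as $\sigma < 2$); on the far piece, the tail is controlled by $\norm{v}_{L^1(\R^n,\frac{1}{1+|y|^\sigma})}$. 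Hence $L_1 v \in L^\infty(B_{1/2})$ with the same bound, and combining $L_1 u = L_1 v + L_1 w$ on $B_{1/2}$ gives the claimed $L^2$ estimate.

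The main obstacle is the weighted $L^1$ estimate on $w$ in the first step: the $L^2$ bound on $L_1 w$ drops straight out of Fourier analysis via Theorem~\ref{t:l2}, but the spatial behaviour of $w = L^{-1}f$ must be read off from Riesz-potential-type kernel estimates (local $L^p$ regularity plus polynomial far-field decay), with care taken so constants remain uniform as $\sigma \to 2^-$.
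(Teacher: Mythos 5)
Your decomposition $u = v + w$, with $w$ solving $Lw = f\chi_{B_1}$ globally on $\R^n$ via the Fourier symbol (Theorem~\ref{t:l2}) and $v = u - w$ solving the homogeneous equation in $B_1$ (Theorem~\ref{t:l1}), is exactly the paper's own proof; the only cosmetic difference is that the paper phrases the weighted $L^1$ bound on the particular solution via ``$w \in \dot H^{\sigma/2}$ plus Sobolev embedding into $L^p$'' whereas you phrase it in Riesz-potential language, and you additionally spell out the Taylor-expansion step that converts $C^{2,\alpha}$ control of $v$ into pointwise control of $L_1 v$, which the paper leaves implicit. The substance, the key lemmas invoked, and the order of argument coincide.
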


\begin{proof} 
Consider the function $v$ that solves 
\[ L v = f \chi_{B_1} \text{   in } \R^n. \]

From Theorem \ref{t:l2} we get that $L_1 v \in L^2(\R^n)$. Theorem \ref{t:l2} can also be applied to $L_2 = (-\lap)^{\sigma/2}$, thus $v$ is in the homogeneous fractional Sobolev space $\dot H^{\sigma/2}$. By Sobolev embedding $v \in L^p(\R^n)$ where $p = 4n/(2n-\sigma)$. In particular $v \in L^1(\R^n,\frac{1}{1+|y|^\sigma})$

Now we apply theorem \ref{t:l1} to $u-v \in L^1(\R^n,\frac{1}{1+|y|^\sigma})$ to conclude the proof.
\end{proof}

\begin{remark}
The ellipticity constants $\lambda$ and $\Lambda$ of $L_0$ and $L_1$ do not need to coincide. Indeed if each $L_i$ is elliptic with constants $\lambda_i$ and $\Lambda_i$, then they would both be elliptic with respect to the constants $\min(\lambda_0,\lambda_1)$ and $\max(\Lambda_0,\Lambda_1)$.
\end{remark}

\section{Subsolutions in $L^1$ are bounded above}

The theorem below is a weak version of the mean value theorem. Its proof uses the same ideas as the proof of the Harnack inequality in \cite{CS}. We include it here for completeness.

\begin{thm} \label{t:weakharnack2}
Let $u$ be a function such that $u$ is continuous in $\overline B_1$, assume that
\begin{align*}
\int_{\R^n} \frac{|u(y)|}{1+|y|^{n+\sigma}} \dd y &\leq C_0 \\
\Mp u &\geq -C_0 \ \ \text{in }B_1
\end{align*}
then
\[ u(x) \leq C C_0 \text{ in } B_{1/2} \]
for every $x \in B_{1/2}$, where $C$ is a {\em universal constant}.
\end{thm}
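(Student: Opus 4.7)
The proof follows the strategy of the weak Harnack / $L^\varepsilon$ estimate from \cite{CS}, applied on the subsolution side. By linearity we may rescale so that $C_0 = 1$, and then it suffices to show $\sup_{B_{1/2}} u \leq C$ for a universal constant. The plan is a contradiction argument by iteration: suppose $u(x_0) \geq M$ with $M \gg 1$ at some $x_0 \in B_{1/2}$; we find a nearby point $x_1$ where $u$ exceeds $(1+\theta) M$, and iterate to blow up $u$ inside $B_1$, contradicting continuity of $u$ on $\overline{B_1}$.

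To produce $x_1$, I would evaluate $\Mp u(x_0) \geq -1$ and split the defining integral into a tail $\{|y|\geq 1/2\}$ and an inner part $\{|y| < 1/2\}$. For the tail, the bound $\si u(x_0,y)^+ \leq |u(x_0+y)|+|u(x_0-y)|$ together with the weighted $L^1$ hypothesis controls the $\Lambda \si u^+$ contribution by a universal constant (using that $|y|^{-(n+\sigma)} \leq C(1+|x_0+y|^{-(n+\sigma)})$ on $\{|y|\geq 1/2, |x_0|<1/2\}$). For the negative part, since $u(x_0) = M$ is large, $\si u(x_0,y)^- \geq M$ on the set where $|u(x_0\pm y)| \leq M/2$; by Markov's inequality applied to the weighted $L^1$ bound this set has weighted measure bounded below by a universal positive constant once $M$ is large, so the $-\lambda \si u^-$ contribution in the tail is at most $-cM$. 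Combining these with $\Mp u(x_0) \geq -1$ forces
\[
\int_{|y|<1/2} \frac{\Lambda \si u(x_0,y)^+ - \lambda \si u(x_0,y)^-}{|y|^{n+\sigma}} \dd y \geq cM,
\]
and dropping the negative part gives a definite lower bound on the positive incremental quotient, which by continuity produces $x_1 \in B_{1/2}(x_0) \subset B_1$ with $u(x_1) \geq (1+\theta)M$ for a universal $\theta > 0$. Since $u$ is only continuous, these pointwise integral evaluations must be justified via the regularization procedure of Section \ref{s:regularizationProcedure}, replacing $u$ by a $C^{2,\alpha}$ approximant and passing to the limit at the end.

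Iterating this step at a geometric sequence of radii $r_k$ chosen so that $\sum r_k < 1/2$ yields $\{x_k\} \subset B_1$ with $u(x_k) \geq (1+\theta)^k M \to \infty$, contradicting the boundedness of $u$ on $\overline{B_1}$. The main obstacle is the bookkeeping: at step $k$ we must rescale so that the tail argument gives a lower bound of order $(1+\theta)^k M$ at scale $r_k$, while the radii remain summable and the points stay in $B_1$. This is precisely the balance struck in the Harnack proof of \cite{CS}, and the inhomogeneity $-C_0$ is absorbed into the universal constants throughout.
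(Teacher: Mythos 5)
Your strategy is genuinely different from the paper's and, as sketched, it has a gap that I do not see how to close. The paper makes a one-shot argument: it introduces the barrier $h_t(x)=t(1-|x|)^{-n}$, takes the smallest $t$ with $u\leq h_t$, and at the touching point $x_0$ plays the $L^1$ bound (which forces $\{u>u(x_0)/2\}$ to have small measure in $B_{d/2}(x_0)$) against the $L^\eps$ estimate of Theorem 10.4 in \cite{CS} applied to $((1-\theta/2)^{-n}u(x_0)-u)^+$ (which forces $\{u<u(x_0)/2\}$ to also have small measure there). There is no iteration and no appeal to the tail pulling the value down.

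The central problem with your tail argument is the $(2-\sigma)$ normalization in \eqref{e:ellipticity}. At scale $1$ the weighted mass of the tail is
\[
\int_{|y|\geq 1/2}\frac{2-\sigma}{|y|^{n+\sigma}}\,\dd y\ \sim\ \frac{2-\sigma}{\sigma},
\]
so the negative contribution from the tail is only of order $(2-\sigma)M$, not $cM$ with a universal $c$. The inequality $\Mp u(x_0)\geq -1$ therefore only yields $M\leq C/(2-\sigma)$, which is not a universal bound and blows up exactly in the regime $\sigma\to 2$ that the paper needs to cover. If you try to rescale to a small radius $r_k$ so that the tail's weighted mass is order one, you trade this for a different failure: for $|y|\in[r_k,1]$ the weight $|y|^{-n-\sigma}$ is no longer comparable to $(1+|x_k\pm y|^{n+\sigma})^{-1}$, so the hypothesis $\int |u|/(1+|y|^{n+\sigma})\leq 1$ gives no control on $\int_{r_k\leq |y|\leq 1}|u(x_k\pm y)|/|y|^{n+\sigma}\,\dd y$, and both the estimate of the positive tail and the claim that $\si u^-\geq M$ on most of the tail break down.

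There is a second, independent gap in the step producing $x_1$: even granting a lower bound of the form $\int_{|y|<1/2}\Lambda\si u^+(x_0,y)\,(2-\sigma)|y|^{-n-\sigma}\,\dd y\geq cM$, this does not yield a point $y$ with $\si u^+(x_0,y)\geq\theta M$. The integral can be carried entirely by small $|y|$, where $\si u^+$ is small but the weight is singular; for the regularized $u^\eps$ we only know $\si u^\eps\leq C_\eps|y|^2$ with a constant that blows up as $\eps\to 0$, so no universal lower bound on $\sup_y\si u^+$ follows. Finally, even if both ingredients were repaired, making a geometric-growth/summable-radii iteration close requires precisely the kind of barrier comparison ($u\leq h_t$) that the paper uses; without it the radii and the growth factors cannot be balanced uniformly. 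I recommend abandoning the pointwise iteration and instead building the proof on the $L^\eps$ measure estimate of Theorem 10.4 in \cite{CS}, as in the paper.
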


\begin{proof}
Dividing $u$ by $C_0$, we can assume without loss of generality that $C_0=1$.

Let us consider the minimum value of $t$ such that
\[ u(x) \leq h_t(x) := t (1-|x|)^{-n} \text { for every } x \in B_1. \]
There must be an $x_0 \in B_1$ such that $u(x_0) = h_t(x_0)$, otherwise we could make $t$ smaller. Let $d = (1-|x_0|)$ be the distance from $x_0$ to $\bdary B_1$.

For $r=d/2$, we want to estimate the portion of the ball $B_r(x_0)$ covered by $\{u < u(x_0)/2\}$ and by $\{u > u(x_0)/2\}$. We will show that $t$ cannot be too large. In this way we obtain the result of the theorem, since the upper bound $t<C$ implies that $u(x) < C (1-|x|)^{-n}$.

Let us first consider $A := \{u > u(x_0)/2\}$. By assumption, we have $u \in L^1(B_1)$, thus
\begin{align*}
|A \cap B_1| &\leq C \abs{\frac{2}{u(x_0)}} \\ 
&\leq C t^{-1} d^n
\end{align*}

Whereas $|B_r| = C d^n$, so if $t$ is large, $A$ can cover only a small portion of $B_r(x_0)$ at most.
\begin{equation} \label{e:a0}
\abs{ \{u>u(x_0)/2\} \cap B_r(x_0) } \leq C t^{-1} \abs{B_r}
\end{equation}

In order to get a contradiction, we will show that $\abs{\{u<u(x_0)/2\} \cap B_r(x_0)} \leq (1-\alpha) B_r$ for a positive constant $\alpha$ independent of $t$. 

We estimate $\abs{\{u<u(x_0)/2\} \cap B_{\theta r} (x_0)}$ for $\theta>0$ small. For every $x \in B_{\theta r} (x_0)$ we have $u(x) \leq h_t(x) \leq (d-\theta d/2)^{-n} \leq u(x_0) (1-\theta/2)^{-n}$, with $(1-\theta/2)^{-n}$ close to one.

Let us consider 
\[ v(x) = (1-\theta/2)^{-n} u(x_0) - u(x) \]
so that $v \geq 0$ in $B_{\theta r}$, and also $\Mm v \leq 1$ since $\Mp u \geq -1$. We would want to apply Theorem 10.4 in \cite{CS} (the $L^\eps$ estimate) to $v$. The only problem is that $v$ is not positive in the whole domain but only on $B_{\theta r}$. In order to apply such theorem we have to consider $w=v^+$ instead, and estimate the change in the right hand side due to the truncation error.

We want to find an upper bound for $\Mm w = \Mm v^+$ instead of $\Mm v$. We know that
\[
\Mm v(x) = (2-\sigma) \int_{\R^n} \frac{\lambda \si v(x,y)^+ - \Lambda \si v(x,y)^-}{|y|^{n+\sigma}} \dx 
\leq 1.
\]

Therefore, if $x \in B_{\theta r /2}(x_0)$,
\begin{align}
\Mm w &= (2-\sigma) \int_{\R^n} \frac{\lambda \si w(x,y)^+ - \Lambda \si w(x,y)^-}{|y|^{n+\sigma}} \dd y \\
& \leq 1 + (2-\sigma) \int_{\R^n \cap \{v(x+y)<0\}} -\Lambda \frac{v(x+y)}{|y|^{n+\sigma}} \dd y \\
& \leq 1 + (2-\sigma) \int_{\R^n \setminus B_{\theta r/2}} \Lambda \frac{(u(x+y)-(1-\theta/2)^{-n} u(x_0))^+}{|y|^{n+\sigma}} \dx \label{e:aa1} \\
& \leq 1 + C (2-\sigma) (\theta r)^{-n-\sigma} \int_{\R^n} \Lambda \frac{|u(y)|}{1+|y|^{n+\sigma}} \dx \leq C (\theta r)^{-n-\sigma}
\end{align}

Now we can apply Theorem 10.4 from \cite{CS} to $w$ in $B_{\theta r/2}(x_0)$. Recall $w(x_0) =((1-\theta/2)^{-n}-1)u(x_0)$, we have
\begin{align*}
\left\vert \set{ u < \frac{u(x_0)}{2} } \cap B_{\frac{\theta r}{4}(x_0)} \right\vert &= | \{ w > u(x_0) ((1-\theta/2)^{-n}-1/2) \} \cap B_{\theta r /4}(x_0) | \\
\leq C (\theta r)^n \big( ((1-&\theta/2)^{-n}-1 ) u(x_0)+C (\theta r)^{-n-\sigma} (r\theta)^\sigma \big)^\eps \left( u(x_0) ((1-\theta/2)^{-n}- \frac 1 2 ) \right)^{-\eps} \\
\leq C (\theta r)^n \big( ((1-&\theta/2)^{-n}-1)^\eps + \theta^{-n \eps} t^{-\eps} \big)
\end{align*}

Now let us choose $\theta>0$ so that the first term is small:
\[ C (\theta r)^n ((1-\theta/2)^{-n}-1)^\eps \leq \frac{1}{4} \abs{B_{\theta r/2}} \ . \]

Notice that the choice of $\theta$ is independent of $t$. For this fixed value of $\theta$ we observe that if $t$ is large enough, we will also have
\[ C (\theta r)^n \theta^{-n \eps} t^{-\eps} \leq \frac{1}{4} \abs{B_{\theta r/2}} \]
and therefore
\[ | \{ u < u(x_0)/2 \} \cap B_{\theta /r 4}(x_0) | \leq \frac{1}{2} \abs{B_{\theta r/4}(x_0)} \]
which implies that for $t$ large
\[ | \{ u > u(x_0)/2 \} \cap B_{\theta r /4}(x_0) | \geq c \abs{B_r} \ . \]
But this contradicts \eqref{e:a0}. Therefore $t$ cannot be large. Rescaling back, we obtain
\[ u(x) \leq C C_0, \]
for any $x$ in $B_{1/2}$.
\end{proof}

\section{Each $L_\ind u$ is bounded}
\label{s:eachLaBounded}
The idea of this section is to show that averages of second order incremental quotients are subsolutions (of the maximal operator $\Mp$). In particular, each $v_\ind := L_\ind u$ is a subsolution to
\begin{align*}
v_\ind &\geq 0 \in B_1 \\
\Mp v_\ind &\geq -C \in B_1
\end{align*}
Then we would estimate the integral of $v_\ind$ in $B_{1/2}$ and use Theorem \ref{t:weakharnack2} to prove that $L_\ind u$ is bounded.

\begin{lemma} \label{l:truncatedLisSubsolution}
Assume $u \in C^2$. Then for every kernel $K$ corresponding to an operator $L \in \LI_0$ and every bump function $b$ such that
\begin{align*}
0 &\leq b(x) \leq 1 \text{ in } \R^n, \\
b(x) &= b(-x) \text{ in } \R^n, \\
b(x) &= 0 \text{ in } \R^n  \setminus B_{1/2},
\end{align*} 
we have
\[ \Mp_2 \left( \int_{B_{\R^n}} \si u(x,y) K(y) b(y) \dd y \right) \geq 0 \qquad \text{in } B_{1/2}\]
\end{lemma}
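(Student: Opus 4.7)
The plan is to show $\Mp_2 w(x) \geq 0$ for $w(x) := \int_{\R^n} \si u(x,y) K(y) b(y) \dd y$ by exhibiting a single operator $\tilde L_0 \in \LI_2$ with $\tilde L_0 w(x) \geq 0$; since $\Mp_2 w(x) = \sup_{\tilde L \in \LI_2} \tilde L w(x)$, a single witness suffices.

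The first step is to commute $\tilde L$ with the integration defining $w$. For any $\tilde L \in \LI_2$ with kernel $\tilde K$, the symmetry identity $\si(\si u(\cdot, y))(x, z) = \si(\si u(\cdot, z))(x, y)$, which is immediate from expanding $u$ at the eight points $x \pm y \pm z$, together with Fubini yields
\[
\tilde L w(x) = \int_{\R^n} K(y) b(y) \, \si(\tilde L u)(x, y) \dd y.
\]
Equivalently $\tilde L w = L_b(\tilde L u)$, where $L_b$ is the linear operator with kernel $K(y) b(y)$. Exchanging the integrals is legitimate because $u \in C^2$ and bounded makes the resulting double integral absolutely convergent: near the origin of either variable one uses $|\si u| \leq C|y|^2$, while for large arguments one uses boundedness of $u$ and the $(2-\sigma)\Lambda/|y|^{n+\sigma}$ upper bound on the kernels.

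The second step is to choose $\tilde L_0$ using the equation. Working in the standing setting of Section~\ref{s:eachLaBounded}, $u$ solves $Iu = \inf_{a \in \A} L_a u = 0$ in $B_1$ with every $L_a \in \LI_2$. Fix $x \in B_{1/2}$ and pick $a^* \in \A$ realizing the infimum, so $L_{a^*} u(x) = 0$; if the infimum is not literally attained, run the argument on an $\eps$-minimizing sequence $a_\eps$ with $L_{a_\eps} u(x) \to 0$ and pass to a limit. Since $L_{a^*} u(z) \geq \inf_a L_a u(z) = Iu(z) = 0$ for every $z \in B_1$ while $L_{a^*} u(x) = 0$, the function $L_{a^*} u$ attains its minimum over $B_1$ at the point $x$.

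To conclude: for every $y \in \supp b \subset B_{1/2}$ and $x \in B_{1/2}$, both $x \pm y$ lie in $B_1$, so $L_{a^*} u(x \pm y) \geq 0 = L_{a^*} u(x)$ and hence $\si(L_{a^*} u)(x, y) \geq 0$. Combined with $K(y) b(y) \geq 0$ supported in $B_{1/2}$, the identity of the first step gives
\[
L_{a^*} w(x) = \int_{\R^n} K(y) b(y) \, \si(L_{a^*} u)(x, y) \dd y \geq 0,
\]
and because $L_{a^*} \in \LI_2$ this yields $\Mp_2 w(x) \geq L_{a^*} w(x) \geq 0$, as required. The main obstacle is the possible non-attainment of the infimum at a given $x$: the $\eps$-minimizer argument must be carried out uniformly, using the $C^2$ regularity supplied by Lemma~\ref{l:approximationByC2} to ensure that $L_{a_\eps} u$ is bounded and that $\si(L_{a_\eps} u)(x, y)$ has a controlled lower bound after passing to the limit.
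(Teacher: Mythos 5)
Your strategy is genuinely different from the paper's, and the underlying idea is appealing: instead of interpreting the inner integral as a limit of averages of translates of $u$, you try to exhibit a single witness $\tilde L_0 \in \LI_2$ with $\tilde L_0 w(x) \geq 0$, chosen as a near-minimizer $L_{a^*}$ of the infimum in the equation, and then exploit that $L_{a^*}u \geq 0$ in $B_1$ with (near-)equality at $x$ to get $\si(L_{a^*}u)(x,y)\geq 0$. That is a valid use of concavity, close in spirit to the second-order argument sketched in the paper's outline section.

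However, the step that commutes $\tilde L$ with the integral defining $w$ contains a genuine gap. You justify Fubini by claiming absolute convergence of the double integral using the bound $|\si(\si u(\cdot,y))(x,z)| \leq C\min(|y|^2,|z|^2)$ for $u\in C^2$. This bound is not enough. In polar coordinates the resulting double integral against $|y|^{-n-\sigma}|z|^{-n-\sigma}$ behaves like
\[
\int_0^1\!\!\int_0^1 \min(r^2,s^2)\, r^{-1-\sigma} s^{-1-\sigma}\,\dd r\,\dd s \;\asymp\; \frac{1}{2-\sigma}\int_0^1 s^{1-2\sigma}\,\dd s,
\]
which diverges for $\sigma\geq 1$ --- precisely the range $\sigma\in(1,2)$ considered in this paper. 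Put differently, $L_{a^*}u$ is not $C^2$ when $u$ is only $C^{2,\alpha}$ (applying a $\sigma$-order operator costs $\sigma$ derivatives, leaving only $C^{2+\alpha-\sigma}$), so $\si(L_{a^*}u)(x,y)$ does not decay like $|y|^2$ near the origin, and neither the commuting identity nor the integral $\int K(y)b(y)\,\si(L_{a^*}u)(x,y)\,\dd y$ is absolutely convergent as written. The $\eps$-minimizer passage inherits the same difficulty: the lower bound $\si(L_{a_\eps}u)(x,y)\geq-2\eps$ gives $L_{a_\eps}w(x)\geq -2\eps\int Kb\,\dd y$, and $\int Kb\,\dd y=\infty$.

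The paper sidesteps both problems by truncating: it replaces $K(y)b(y)$ by $\phi_k(y)=\chi_{\R^n\setminus B_{1/k}}(y)K(y)b(y)\in L^1$, writes the inner integral as $u\ast\phi_k - \|\phi_k\|_{L^1}u$, applies Proposition~\ref{p:averageIsSubsolution} to $u\ast(\phi_k/\|\phi_k\|_{L^1})$ (which is a bona fide average of translates), and then invokes Lemma 5.8 in \cite{CS} and the stability Lemma 4.3 in \cite{CS2} to pass to the limit $k\to\infty$ in the viscosity sense. To salvage your approach you would need an analogous regularization: either prove the estimate first for $u\in C^4$ (so $L_{a^*}u\in C^2$ and the double integral converges) and then approximate, or truncate the kernel $Kb$ to an $L^1$ kernel as the paper does and only pass to the limit at the end. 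As it stands, the absolute convergence claim is false for $\sigma\geq1$ and the proof does not close.
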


\begin{proof}
Let $\phi_k$ be the $L^1$ function $\phi_k(y) =  \chi_{\R^n \setminus B_{1/k}}(y) K(y) b(y)$. Since $u \in C^2$, we can approximate the value of the integral uniformly by
\[ \int_{B_{\R^n}} \si u(x,y) K(y) b(y) \dd y = \lim_{k \to \infty} \int_{\R^n} \si u(x,y) \phi_k(y) \dd y = \lim_{k \to \infty} u \ast \phi_k - \norm{\phi_k}_{L^1} u. \]

Applying Proposition \ref{p:averageIsSubsolution}, we have
\[ I \left(u \ast  \frac{\phi_k}{\norm{\phi_k}_{L^1}} \right) \geq 0\]
On the other hand, we know that $I u = 0$, then by Lemma 5.8 in \cite{CS} and the fact that $\Mp$ is homogeneous,
\[ \Mp_2 (u \ast \phi_k - \norm{\phi_k}_{L^1} u) = \norm{\phi_k}_{L^1} \Mp_2 \left(u \ast  \frac{\phi_k}{\norm{\phi_k}_{L^1}} - u \right) \geq 0 \]

Since we have $\Mp_2(u \ast \phi_k - \norm{\phi_k}_{L^1} u) \geq 0$ for every $k>0$. Then the result follows by Lemma 4.3 in \cite{CS2} by taking limit as $k\to \infty$.
\end{proof}

\begin{lemma} \label{l:fullLisSubsolution}
Assume $u \in C^2$. Then there is a universal constant $C$ such that for every operator $L \in \LI_2$ \[ \Mp_2 \left( Lu \right) \geq -C \qquad \text{in } B_{1/2}\]
\end{lemma}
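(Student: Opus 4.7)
The plan is to split the operator $L$ into a singular near-origin piece, to which Lemma \ref{l:truncatedLisSubsolution} applies directly, and a smooth tail piece whose contribution to $\Mp_2(Lu)$ can be controlled by $\norm{u}_{L^\infty}$.

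Fix a smooth even cutoff $b:\R^n\to[0,1]$ with $b\equiv 1$ on $B_{1/4}$ and $\supp b\subset B_{1/2}$, and write $K = Kb + K(1-b)$, giving $Lu = L^{\mathrm{in}}u + L^{\mathrm{out}}u$ with $L^{\mathrm{in}}u(x) = \int \si u(x,y)\, K(y)b(y)\dd y$ and $L^{\mathrm{out}}u(x) = \int \si u(x,y)\, K(y)(1-b(y))\dd y$. Lemma \ref{l:truncatedLisSubsolution} applied to $b$ gives $\Mp_2(L^{\mathrm{in}}u)(x) \geq 0$ for every $x\in B_{1/2}$. Combining with the elementary inequality $\Mp_2(f+g) \geq \Mp_2(f) + \Mm_2(g)$ (immediate from $\sup_{L'}(L'f + L'g) \geq \sup_{L'} L'f + \inf_{L'} L'g$), the lemma reduces to showing $\Mm_2(L^{\mathrm{out}}u)(x) \geq -C$ in $B_{1/2}$.

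For the tail, set $\tilde K := K(1-b)$. Because $L\in\LI_2$ and $1-b$ vanishes identically in a neighborhood of the origin, $\tilde K$ is $C^2$ on all of $\R^n$ with $\tilde K$, $\nabla\tilde K$, $D^2\tilde K\in L^1(\R^n)$, supported outside $B_{1/4}$ and decaying like $|y|^{-n-\sigma}$ at infinity. By evenness of $\tilde K$ we may rewrite
\[
L^{\mathrm{out}}u = 2(u*\tilde K) - 2c_0\, u, \qquad c_0 := \int_{\R^n} \tilde K(y)\dd y,
\]
so that $u*\tilde K \in C^2(\R^n)$ with $\norm{u*\tilde K}_{C^2(\R^n)} \leq C\norm{u}_{L^\infty}$ (the derivatives fall on $\tilde K$). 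The key tool is the Fubini identity valid for any $L'\in\LI_2$:
\[
L'(L^{\mathrm{out}}u)(x) = \int_{\R^n} \tilde K(z)\, L'\bigl(\si u(\cdot,z)\bigr)(x)\dd z = \int_{\R^n} \si (L'u)(x,z)\, \tilde K(z)\dd z,
\]
which expresses $L'L^{\mathrm{out}}u$ as an integral against the bounded $L^1$ kernel $\tilde K$. Since $\tilde K$ is supported away from the origin and the functions $\si u(\cdot,z)$ have $L^\infty$ norm bounded by $4\norm{u}_{L^\infty}$ uniformly in $z$, the right-hand side admits a uniform-in-$L'$ bound of the form $|L'(L^{\mathrm{out}}u)(x)| \leq C\norm{u}_{L^\infty}$, which yields $|\Mm_2(L^{\mathrm{out}}u)(x)| \leq C\norm{u}_{L^\infty}$ as needed.

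The main obstacle is making the pointwise bound on $L'\bigl(\si u(\cdot,z)\bigr)(x)$ rigorous uniformly in $L'\in\LI_2$, since $\si u(\cdot,z)$ is only $C^2$ a priori: the singular integral defining $L'$ at $x$ a priori couples to the local $C^2$ behaviour of $u$ around $x\pm z$, and one must take care that this does not introduce a non-universal constant. I would circumvent this by first establishing the estimate for the regularized solutions $u^\eps$ provided by Lemma \ref{l:approximationByC2}, which are $C^{2,\alpha}$ with smoothness compatible with the ellipticity structure of $\LI_2$, and then passing to the limit $\eps\to 0$ using the stability of the $\Mp_2$ subsolution inequality under uniform convergence (Lemma 4.3 in \cite{CS2}) together with $u^\eps\to u$ uniformly.
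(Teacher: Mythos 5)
The high-level plan — split the kernel by a bump function so that Lemma~\ref{l:truncatedLisSubsolution} handles the near-origin piece, then control the tail by an $L^1$ estimate — is the same as the paper's. But your estimate for the tail piece has a real gap, which you flag yourself but do not actually resolve.

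The step that fails is the claim ``since $\si u(\cdot,z)$ has $L^\infty$ norm $\leq 4\norm{u}_{L^\infty}$ uniformly in $z$, the right-hand side $\int \tilde K(z)\, L'(\si u(\cdot,z))(x)\,\dd z$ is bounded by $C\norm{u}_{L^\infty}$.'' An operator $L'\in\LI_2$ is of order $\sigma>1$; applying it to a merely bounded function does \emph{not} produce a bounded quantity, so $\abs{L'(\si u(\cdot,z))(x)}$ is simply not controlled by the $L^\infty$ norm of $\si u(\cdot,z)$. The regularization route you propose does not save this: for the regularized solutions $u^\eps$, the bound on $L'(\si u^\eps(\cdot,z))(x)$ involves $\norm{u^\eps}_{C^2}$, which blows up as $\eps\to 0$, so passing to the limit yields a non-universal constant. (Note this is exactly what the regularization procedure of Section~\ref{s:regularizationProcedure} is \emph{not} supposed to cost you: the whole point is to get estimates independent of $\eps$.)

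Your own observation that $u\ast\tilde K$ is $C^2$ with $\norm{u\ast\tilde K}_{C^2}\leq C\norm{u}_{L^\infty}$ (equivalently, $L'\tilde K\in L^1$ uniformly in $L'$, so $L'(u\ast\tilde K)=u\ast L'\tilde K$ is bounded) is the correct key fact, and it is precisely what the paper exploits when it estimates $L_\ind(\phi_k(1-b))\in L^1$. But if you follow this through on $L^{\mathrm{out}}u = 2(u\ast\tilde K)-2c_0 u$, you control the first term and are left with $-2c_0\,L'u(x)$, which cannot be bounded by $\norm{u}_{L^\infty}$ for arbitrary $L'\in\LI_2$. This residual term is the crux. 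The paper never estimates $L'u$ at all: it keeps the full expression $u\ast(\phi_k(1-b)) - \norm{\phi_k(1-b)}_{L^1}\,u$ as one object, notes that $\abs{I(u\ast(\phi_k(1-b)))}\leq C\norm{u}_{L^\infty}$ (via the $L^1$ bound on $L_\ind(\phi_k(1-b))$) together with $I(c\,u)=cIu=0$, and then invokes the viscosity-solution comparison result (Lemma~5.8 in \cite{CS}, plus the homogeneity of the extremal operators) to bound the relevant Pucci operator of the difference. In other words, the equation $Iu=0$ and the concavity structure of $I$ are used again here, not just in Lemma~\ref{l:truncatedLisSubsolution}. Your proposal never appeals to the equation in the tail estimate, and that omission is what leaves the $-2c_0 L'u$ term uncontrolled.
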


\begin{proof}
 As in the proof of Lemma \ref{l:truncatedLisSubsolution}, we let $\phi_k$ be the $L^1$ function $\phi_k(y) =  \chi_{\R^n \setminus B_{1/k}}(y) K(y)$ and we approximate the value of the integral uniformly by
\[ Lu(x) = \lim_{k \to \infty} \int_{\R^n} \si u(x,y) \phi_k(y) \dd y = \lim_{k \to \infty} (u \ast \phi_k - u \norm{\phi_k}_{L_1}) \]

Let $b$ be a bump function such that
\begin{align*}
0 &\leq b(x) \leq 1 \text{ in } \R^n, \\
b(x) &= b(-x) \text{ in } \R^n, \\
b(x) &= 0 \text{ in } \R^n  \setminus B_{1/2},
\end{align*}

As in the proof of Lemma \ref{l:truncatedLisSubsolution},
\[ I \left(u \ast  \frac{\phi_k(x) b(x)}{\norm{\phi_k(x) b(x)}_{L^1}} \right) \geq 0\]
Therefore $\Mp_2 \left( u \ast (\phi_k b) - \norm{\phi_k b}_{L^1} u \right) \geq 0$.

On the other hand, we estimate $I(u \ast (\phi_k(1-b)))$ in $B_{1/2}$.
\begin{align*}
I(u \ast  (\phi_k(1-b))) &= \inf_\ind \int_{\R^n} L_\ind(u \ast (\phi_k(1-b))) \dd y \\
&= \inf_\ind \int_{\R^n} u \ast L_\ind(\phi_k(1-b)) \dd y
\end{align*}
Since by \eqref{e:ellipticity}, $\phi_k(1-b) \in L^1$ and by \eqref{e:kernelc2}, $D^2 \phi_k (1-b) \in L^1$ uniformly in $k$, then $L(\phi_k(1-b))$ is bounded in $L^1$ uniformly for all $L \in \LI_0$. Therefore,
\[ \abs{I(u \ast  (\phi_k(1-b)))} \leq C \norm{u}_{L^\infty} .\]

Using Lemma 5.8 in \cite{CS} and the homogeneity of $\Mm$,
\[ \Mm_2 \left( u \ast (\phi_k (1-b)) - \norm{\phi_k (1-b)}_{L^1} u \right) \geq -C \norm{u}_{L^\infty} \]
Therefore
\begin{align*}
\Mp_2 \left( u \ast \phi_k - \norm{\phi_k}_{L^1} u \right) &\geq \Mp_2 \left( u \ast (\phi_k b) - \norm{\phi_k b}_{L^1} u \right) + \Mm_2 \left( u \ast (\phi_k (1-b)) - \norm{\phi_k (1-b)}_{L^1} u \right) \\
&\geq -C \norm{u}_{L^\infty}
\end{align*}

We finish the proof of the lemma by taking $k \to \infty$ using Lemma 4.3 in \cite{CS2}.
\end{proof}

\begin{lemma} \label{l:eachLaBounded} 
Let $u$ be a solution of \eqref{e:main}. Assume $L_\ind \in \LI_2$ for every $\ind$. Then for every $\ind$, $L_\ind u \leq C \norm{u}_{L^\infty}$ in $B_{1/8}$ for some universal constant $C$. 
\end{lemma}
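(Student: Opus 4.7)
The plan is to apply Theorem~\ref{t:weakharnack2} (the weak mean value estimate) to $v := L_\ind u$. Lemma~\ref{l:approximationByC2} lets me assume $u \in C^{2,\alpha}(B_1)$, so $v$ is classically defined there. Since $Iu = \inf_{\ind'} L_{\ind'} u = 0$ in $B_1$, each $v \geq 0$ in $B_1$, and Lemma~\ref{l:fullLisSubsolution} gives $\Mp_2 v \geq -C\norm{u}_{L^\infty}$ on $B_{1/2}$, hence $\Mp v \geq -C\norm{u}_{L^\infty}$ there.

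To feed $v$ into Theorem~\ref{t:weakharnack2} I first bound it in $L^1$ by duality. Let $\phi \in C^\infty_c(\R^n)$ be a nonnegative cutoff with $\phi \equiv 1$ on $B_{3/4}$ and $\supp\phi \subset B_{7/8}$. The symmetry $K_\ind(y) = K_\ind(-y)$ and Fubini give
\[
\int_{B_{3/4}} v \dd x \;\leq\; \int \phi\, L_\ind u \dd x \;=\; \int u\, L_\ind \phi \dd x \;\leq\; \norm{u}_{L^\infty}\, \norm{L_\ind\phi}_{L^1(\R^n)}.
\]
The quantity $\norm{L_\ind\phi}_{L^1(\R^n)}$ is uniformly bounded over $L_\ind \in \LI_0$ (controlled by $\norm{\phi}_{C^2}$ near $\supp\phi$ and by the decay $|L_\ind\phi(x)| \leq C|x|^{-n-\sigma}$ away from it), so combined with $v \geq 0$ we obtain $\norm{v}_{L^1(B_{3/4})} \leq C\norm{u}_{L^\infty}$.

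Finally I truncate: let $\chi$ be smooth with $\chi \equiv 1$ on $B_{1/4}$ and $\supp\chi \subset B_{3/4}$, and set $\tilde v := v\chi$ (extended by $0$ outside $B_{3/4}$). Then $\tilde v \geq 0$ globally, $\tilde v = v$ on $B_{1/4}$, and $\int |\tilde v(y)|/(1+|y|^{n+\sigma}) \dd y \leq \norm{\tilde v}_{L^1(\R^n)} \leq C\norm{u}_{L^\infty}$. For $x \in B_{1/8}$ and any $L \in \LI_0$, writing $\Mp\tilde v \geq \Mp v + \Mm(\tilde v - v)$ reduces matters to controlling the correction
\[
L(\tilde v - v)(x) \;=\; 2\int v(z)(\chi(z)-1)\, K(x-z) \dd z,
\]
and on the support of $\chi - 1$ intersected with $B_{3/4}$ one has $|x - z| \geq 1/8$, so $K(x-z) \leq C$ uniformly for $L \in \LI_0$; this yields $|L(\tilde v - v)(x)| \leq C\norm{v}_{L^1(B_{3/4})} \leq C\norm{u}_{L^\infty}$, hence $\Mp\tilde v \geq -C\norm{u}_{L^\infty}$ on $B_{1/8}$. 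A scaled application of Theorem~\ref{t:weakharnack2} then gives $L_\ind u = \tilde v \leq C\norm{u}_{L^\infty}$ on $B_{1/8}$.

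The main obstacle is the last step: since $v$ is a priori defined only on $B_1$, the estimate $\Mp v \geq -C\norm{u}_{L^\infty}$ from Lemma~\ref{l:fullLisSubsolution} and its comparison with $\Mp\tilde v$ must be interpreted in a viscosity sense, or via the canonical bounded global extension $v = \lim_k\bigl(u\ast\phi_k - \norm{\phi_k}_{L^1} u\bigr)$ implicit in the proof of that lemma. One also has to arrange the support of $\chi$ strictly inside $B_1$ so that the contribution of $v$ near $\partial B_1$ to the truncation error is handled by the $L^1$ bound above rather than by any uniform pointwise control of $v$, which is not available.
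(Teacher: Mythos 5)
Your strategy matches the paper's: truncate $v = L_\ind u$ with a cutoff, get an $L^1$ bound by duality, show the truncation still satisfies a one-sided $\Mp$ inequality, and apply Theorem~\ref{t:weakharnack2}. The first two steps are correct and agree with the paper up to cosmetic choices of radii and cutoffs.

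The gap is in the estimate of the truncation error, and you have not in fact closed it. Your identity $L(\tilde v - v)(x) = 2\int(\chi(z)-1)v(z)K(x-z)\dd z$ is correct for $x \in B_{1/8}$, but the integral runs over all of $\R^n$, not just $B_{3/4}$: on $\R^n\setminus B_{3/4}$ one has $\chi-1\equiv -1$ and $\tilde v - v = -v$, which is not zero and is not controlled by $\norm{v}_{L^1(B_{3/4})}$. Worse, $v = L_\ind u$ is not classically defined outside $B_1$, where $u$ is merely bounded, so the tail of that integral does not even make sense. Neither of the repairs you suggest resolves this: a viscosity reading of $\Mp v\geq -C$ still requires $v$ to be defined globally (the nonlocal evaluation on a test function uses $v$ on all of $\R^n$), and the limit $\lim_k\bigl(u\ast\phi_k - \norm{\phi_k}_{L^1}u\bigr)$ diverges at points where $u$ is not regular, i.e.\ everywhere outside $B_1$, so there is no ``canonical bounded global extension.'' The paper resolves this with a second integration by parts: the tail term is rewritten (Fubini plus symmetry of the kernels, carried out at the level of the bounded approximants $u\ast\phi_k - \norm{\phi_k}_{L^1}u$) as $2\int u(x+y)\,L_\ind\bigl[(1-\chi(x+\cdot))K\bigr](y)\dd y$, and the function $(1-\chi(x+\cdot))K$ vanishes near the origin, is $C^2$, and decays like $|y|^{-n-\sigma}$, so by \eqref{e:ellipticity} and \eqref{e:kernelc2} its image under $L_\ind$ lies in $L^1$ uniformly over $x\in B_{1/8}$ and over $\ind$. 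This transfers the whole estimate onto $\norm{u}_{L^\infty}$, and it is also exactly where the $\LI_2$ hypothesis on $L_\ind$ enters, so the omission is not cosmetic.
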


\begin{proof}
By Lemma \ref{l:approximationByC2}, we can assume the function $u$ is $C^2$. Indeed, for every $\eps > 0$ we can approximate $u$ with a $C^2$ function $u^\eps$ that satisfies the same kind of equation. If we can prove the estimate for $u^\eps$ with a universal constant $C$ that does not depend on $\eps$, then we would prove it for $u$ by passing to the limit as $\eps \to 0$. So we assume that $u \in C^2$ and thus all the integrals are well defined.

From Lemma \ref{l:fullLisSubsolution}, we know that for each $L_\ind$,
\[ \Mp_2(L_\ind u) \geq -C\norm{u}_{L^\infty} \text{ in } B_{1/2}. \]

We would want to apply Theorem \ref{t:weakharnack2} to $L_\ind u$. For that we still need an estimate at least in $L^1((1+|y|)^{-n-\sigma})$. We can easily obtain an estimate in $L^1(B_{1/2})$ using the fact that $L_\ind u \geq 0$ in $B_1$ because of the equation \eqref{e:main}.

Let $b$ be a smooth cutoff function such that $0 \leq b \leq 1$ in $\R^n$, $b=1$ in $B_{1/2}$ and $b=0$ outside $B_1$. We multiply $L_\ind u$ by $b$ and \emph{integrate by parts}.
\begin{align*}\int_{\R^n} L_\ind u(x) \ b(x) \dx \leq \int_{\R^n} L_\ind b(x) \ u(x) \dx \leq C \norm{u}_{L^\infty}
\end{align*} 
for some universal constant $C$. Since $L_\ind u \geq 0$ in $B_1$, then it is in $L^1(B_{1/2})$.

We would still need some control on the values of $L_\ind u$ away from $B_{1/2}$ in order to apply theorem \ref{t:weakharnack2}. We do not want to assume any regularity for $u$ outside $B_1$, so our only choice is to cut off again.

Let $c(x) := b(2x)$ and $w(x) = c(x) \: L_\ind u(x)$ 
We will estimate $\Mp_2 w(x)$ for $x \in B_{1/4}$. For that, let us consider any operator $L \in \LI_2$ and estimate
\begin{align*}
L w(x) &= \int_{\R^n} \si w(x,y) K(y) \dd y \\
&= \int_{\R^n} \si L_\ind u(x,y) K(y) \dd y - \int_{\R^n} \si (L_\ind u (1-c))(x,y) K(y) \dd y \\
&\geq -C\norm{u}_{L^\infty} - 2\int_{\R^n} L_\ind u(x+y) (1-c(x+y)) K(y) \dd y \\
&\geq -C\norm{u}_{L^\infty} - 2\int_{\R^n} u(x+y) L_\ind \left( (1-c(x+.)) K \right) \dd y \\
&\geq -C\norm{u}_{L^\infty}
\end{align*}

For the last inequality it was used that $L_\ind \left( (1-c(x+.)) K \right)$ is in $L^1$ uniformly for $x \in B_{1/4}$. This follows from the estimates \eqref{e:ellipticity} and \eqref{e:kernelc2} of the kernel $K$.

Now we can apply Theorem \ref{t:weakharnack2} to $w$ to obtain that $w \leq C \norm{u}_{L^\infty}$ in $B_{1/8}$. but $w=L_\ind u$ in $B_{1/4}$, so we finish the proof.
\end{proof}

\section{Extremal operators are bounded}
\label{s:ExtremalOperatorsAreBounded}

In this section we prove Theorem \ref{t:MabsBounded}. We will achieve this result by showing that $Lu(x)$ is bounded uniformly for all $L \in \LI_0$. The ideas are very similar to section \ref{s:eachLaBounded} but now we apply them to operators that are not a priori bounded below, so we use the $L^2$ estimates from section \ref{s:linearTheory} instead.

\begin{lemma} \label{l:locallizedSubsolution}
Assume $u \in C^2$. Then for every kernel $K$ corresponding to an operator $L \in \LI_0$ and every bump function $b$ such that
\begin{align*}
0 &\leq b(x) \leq 1 \text{ in } \R^n, \\
b(x) &= b(-x) \text{ in } \R^n, \\
b(x) &= 0 \text{ in } \R^n  \setminus B_{1/2},
\end{align*} 
then
\[ \Mp_2 \left(  b(x) \int_{B_{1/2}} \si u(x,y) K(y) \dd y \right) \geq -C \norm{u}_{L^\infty} \qquad \text{in } B_{1/2}\]
\end{lemma}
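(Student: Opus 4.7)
The plan is to adapt the argument of Lemma~\ref{l:fullLisSubsolution}: decompose $bv$ into one piece to which Lemma~\ref{l:truncatedLisSubsolution} applies (giving $\Mp_2 \geq 0$) plus correction terms controlled by $\|u\|_{L^\infty}$. By Lemma~\ref{l:approximationByC2} I may assume $u \in C^2$, so all integrals are classical. Using the pointwise identity $b(x)\si u(x,y) = \si(bu)(x,y) - (b(x+y)-b(x))u(x+y) - (b(x-y)-b(x))u(x-y)$, integrating against $K(y)$ over $B_{1/2}$, and symmetrizing $K$ in $y$ (permitted since $\si u$ is even in $y$), I obtain the nonlocal Leibniz identity
\[ b(x) v(x) = W(x) - u(x)\tilde b(x) - 2R(x), \]
with $v(x) = \int_{B_{1/2}} \si u(x,y) K(y)\dd y$, $W(x) = \int_{B_{1/2}} \si(bu)(x,y) K(y)\dd y$, $\tilde b(x) = \int_{B_{1/2}} \si b(x,y) K(y)\dd y$, and $R(x) = \int_{B_{1/2}} (b(x+y)-b(x))(u(x+y)-u(x)) K(y)\dd y$.

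For the main term I write $W = v - \int_{B_{1/2}} \si((1-b)u) K\dd y$; the piece $v$ is exactly the object of Lemma~\ref{l:truncatedLisSubsolution} (after approximating $\chi_{B_{1/2}}$ by a smooth symmetric bump in $y$), so $\Mp_2 v \geq 0$ in $B_{1/2}$, while the remainder will be bounded pointwise by $C\|u\|_{L^\infty}$ by repeating the Leibniz decomposition on $(1-b)u$. The correction $u\tilde b$ satisfies $|\tilde b|\leq C$ since $|\si b(x,y)|\leq \|D^2 b\|_\infty |y|^2$ and $\int_{B_{1/2}}|y|^2 K\dd y<\infty$ (for $\sigma<2$), so $|u(x)\tilde b(x)|\leq C\|u\|_{L^\infty}$.

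For the cross term $R$, symmetrizing in $y$ yields the decomposition $R(x) = \tfrac14 \int_{B_{1/2}} [\si b \cdot \si u + (b(x+y)-b(x-y))(u(x+y)-u(x-y))] K\dd y$. The first piece is $O(\|u\|_{L^\infty})$ as above, and for the second I use the antisymmetric-integration identity $\int f(y)(u(x+y)-u(x-y))\dd y = 2\int f(y) u(x+y)\dd y$ (valid for antisymmetric $f$) together with $\int_{B_{1/2}} y K(y)\dd y = 0$ to reduce to an integrand bounded by $\|u\|_{L^\infty}$.

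Combining via $\Mp_2(f+g)\geq \Mp_2(f) + \Mm_2(g)$, and observing that the correction terms, being $C^2$ functions of $x$ with $L^\infty$ norm at most $C\|u\|_{L^\infty}$, have $\Mm_2$ values bounded below by $-C'\|u\|_{L^\infty}$, yields $\Mp_2(bv) \geq -C\|u\|_{L^\infty}$ in $B_{1/2}$. The main obstacle is getting the $\|u\|_{L^\infty}$-control on $R$ and on $\int_{B_{1/2}} \si((1-b)u) K\dd y$: direct estimates pick up $\|Du\|$ or $\|D^2 u\|$, and only the combination of the symmetry of $K$, the smoothness of $b$, and the antisymmetric-integration trick rescues a pure $\|u\|_{L^\infty}$ bound---this is the nonlocal analog of the integration-by-parts argument that pervades the paper.
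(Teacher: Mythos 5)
There are two genuine gaps, and both stem from the same issue: the correction terms in your Leibniz decomposition are controlled only in $L^\infty$, which is not enough to control $\Mm_2$ of them, and the pointwise bound on $R$ itself does not actually close for $\sigma>1$.

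First, at the end you argue that the correction terms $u\tilde b$ and $2R$, ``being $C^2$ functions of $x$ with $L^\infty$ norm at most $C\|u\|_{L^\infty}$, have $\Mm_2$ values bounded below by $-C'\|u\|_{L^\infty}$.'' This is false: $\Mm_2 g$ involves second differences of $g$ against a kernel of order $\sigma$, so it scales like a $C^{1,1}$ (or at least $C^\sigma$) seminorm, not like $\|g\|_{L^\infty}$. A function such as $g(x)=M\sin(Nx_1)$ has $\|g\|_\infty=M$ but $\Mm_2 g \sim -MN^\sigma$, arbitrarily negative. Knowing that $g\in C^2$ guarantees the integrals converge, but the resulting bound would depend on $\|u\|_{C^2}$, which defeats the purpose (we only have a $C^2$ \emph{a priori} assumption, and the constant must not depend on it). Second, your estimate of $R$ does not yield a pure $\|u\|_{L^\infty}$ bound for $\sigma>1$. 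After symmetrizing and invoking the antisymmetric-integration identity, you are left with $2\int (b(x+y)-b(x-y))K(y)\,u(x+y)\,\dd y$. Near $y=0$ the odd kernel $(b(x+y)-b(x-y))K(y)$ behaves like $|y|^{1-n-\sigma}$, which is not in $L^1$ when $\sigma\ge1$. Using $\int yK\,\dd y=0$ to replace $u(x+y)$ by $u(x+y)-u(x)$ gains nothing at the level of $\|u\|_{L^\infty}$: to make the integral converge you would need $|u(x+y)-u(x)|\lesssim|y|^\beta$ for some $\beta>\sigma-1$, i.e., a Lipschitz-type bound, which is exactly what you cannot assume. So your ``antisymmetric trick plus first-moment cancellation'' only buys one power of $|y|$, and one power is insufficient once $\sigma>1$ (the regime the paper actually works in).

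The paper avoids this by never performing a Leibniz expansion of $b\cdot L^t u$. It writes $bL^t u = L^t u - (1-b)L^t u$, uses Lemma~\ref{l:truncatedLisSubsolution} to handle $L^t u$, and for the remainder transfers the \emph{entire} operator $L^t$ onto the localizing cutoff: $\int (1-b(x+y))\,L^t u(x+y)\,K(y)\,\dd y = \int u(x+y)\,L^t\big((1-b(x+\cdot))K\big)(y)\,\dd y$. This moves $\sigma$ orders of differentiation, not one, onto a function that is $C^2$ away from the origin, and the hypothesis $D^2K\in L^1$ (class $\LI_2$) then makes $L^t\big((1-b(x+\cdot))K\big)$ an $L^1$ kernel, giving the clean $\|u\|_{L^\infty}$ bound. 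That use of the $\LI_2$ smoothness of $K$ is the missing ingredient in your argument.
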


\begin{proof}
Let us call \[L^t u(x) = \int_{B_{1/2}} \si u(x,y) K(y) \dd y.\] By Lemma \ref{l:truncatedLisSubsolution}, we have
$\Mp_2 (L^t u) \geq 0$. Let $L$ be any operator in $\LI_2$, so we estimate
\begin{align*}
L  (b L^t u) (x) &= \int_{\R^n} \si (L^t u)(x,y) K(y) \dd y - \int_{\R^n} \si ((1-b)L^t u) (x,y) K(y) \dd y \\
&\geq - 2 \int_{\R^n} (1-b(x+y))L^t u(x+y) K(y) \dd y \\
&\geq - 2 \int_{\R^n} u(x+y) L^t((1-b(x+\cdot)) K)(y) \dd y \\
&\geq -C \norm{u}_{L^\infty}
\end{align*}
where we used that $L^t((1-b(x+\cdot)) K)$ is bounded in $L^1$ uniformly in $x$. This is due to the fact that $D^2 ((1-b(x+\cdot)) K) \in L^1(\R^n)$ because of \eqref{e:kernelc2}.
\end{proof}

\begin{lemma} \label{l:everyLisBounded}
Let $u$ be a solution of \eqref{e:main} with all operators $L_\ind$ in $\LI_2$. There is a constant $C$ such that for every operator $L$ in $\LI_0$ we have
\[ |L u(x)| \leq C \norm{u}_{L^\infty} \qquad \text{in } B_{1/2} \]
\end{lemma}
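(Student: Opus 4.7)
The plan is to follow the strategy of Lemma \ref{l:eachLaBounded}: reduce to $u \in C^2$, establish a subsolution-of-$\Mp_2$ property for a cutoff of $Lu$, and invoke the weak mean value estimate of Theorem \ref{t:weakharnack2}. The new issue is that $Lu$ is no longer one-signed for a generic $L \in \LI_0$, so the $L^1$ bound that Lemma \ref{l:eachLaBounded} obtained from $L_\ind u \geq 0$ and integration by parts must be replaced by an $L^2$ bound coming from the linear theory of Section \ref{s:linearTheory}. This produces the upper bound $Lu \leq C\norm{u}_{L^\infty}$; the matching lower bound then follows by a short algebraic trick.

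For the upper bound, I would reduce to $u \in C^2$ via Lemma \ref{l:approximationByC2} and decompose $Lu(x) = L^t u(x) + L^{ext}u(x)$, where $L^t u$ integrates over $B_{1/2}$ and $L^{ext}u$ over its complement. The exterior piece is bounded pointwise by $C\norm{u}_{L^\infty}$ because the kernel is integrable away from the origin. For the local part, fix any $\ind_0 \in \A$; Lemma \ref{l:eachLaBounded} gives an $L^\infty$ (hence $L^2$) bound for $L_{\ind_0} u$ on a small ball. Since $L_{\ind_0} \in \LI_2 \subset \LI_1$, a rescaled application of Theorem \ref{t:linearL2estimate-local} (with $f = L_{\ind_0} u$) transfers this into $Lu \in L^2(B_r)$ with norm at most $C\norm{u}_{L^\infty}$ for some universal $r > 0$, hence the same for $L^t u$. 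Choose a smooth even cutoff $b$ supported in $B_r$ and set $w := b\, L^t u$. Then $\norm{w}_{L^1(\R^n)} \leq C\norm{u}_{L^\infty}$, so the weighted integrability hypothesis of Theorem \ref{t:weakharnack2} holds, and Lemma \ref{l:locallizedSubsolution} gives $\Mp_2 w \geq -C\norm{u}_{L^\infty}$ in a smaller ball. Theorem \ref{t:weakharnack2} then produces $w \leq C\norm{u}_{L^\infty}$ in a yet smaller ball, so $Lu \leq C\norm{u}_{L^\infty}$ there.

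For the lower bound, fix $L \in \LI_0$ with constants $\lambda, \Lambda$, set $M := 2\Lambda/\lambda$, and form the auxiliary operator $\tilde L := M L_{\ind_0} - L$ for any $\ind_0 \in \A$. Its kernel $MK_{\ind_0} - K$ satisfies the two-sided ellipticity bounds defining $\LI_0$ with different but still universal constants, so the upper bound just proven applies to $\tilde L$ as well, yielding $\tilde L u \leq C\norm{u}_{L^\infty}$. Since the equation forces $L_{\ind_0} u \geq 0$ in $B_1$, we conclude $Lu = M L_{\ind_0} u - \tilde L u \geq -C\norm{u}_{L^\infty}$. The main obstacle, in my view, is the bookkeeping of radii: Lemma \ref{l:eachLaBounded} yields its bound on $B_{1/8}$, Theorem \ref{t:linearL2estimate-local} passes from $B_1$ to $B_{1/2}$, and both Lemma \ref{l:locallizedSubsolution} and Theorem \ref{t:weakharnack2} contract further, so chaining these with appropriate rescalings to end with an estimate on $B_{1/2}$ as stated requires care but is routine.
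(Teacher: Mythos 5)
Your proposal follows the paper's proof essentially line by line: reduce to $u \in C^2$, use Lemma \ref{l:eachLaBounded} plus Theorem \ref{t:linearL2estimate-local} to get an $L^2$ bound on $Lu$ for all $L \in \LI_0$, split off the integrable exterior tail, cut off the local part by a bump function to land in the weighted $L^1$ space, invoke Lemma \ref{l:locallizedSubsolution} for the subsolution property and Theorem \ref{t:weakharnack2} for the pointwise upper bound, and finally obtain the lower bound by the algebraic trick $K = M K_{\ind_0} - \tilde{K}$ with $\tilde{K}$ still elliptic (the paper writes this as $K_d = \tfrac{2}{\lambda}K_\ind - \tfrac{1}{\Lambda}K$, but it is the same idea up to normalization). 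Your account is correct, and your remark that the chain of shrinking radii is routine matches the paper, which proves the bound on $B_{1/64}$ and recovers $B_{1/2}$ by scaling and covering.
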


\begin{proof}
As in the proof of Lemma \ref{l:eachLaBounded}, we can and will assume that $u \in C^2$. We will write the proof assuming that $\norm{u}_{L^\infty(\R^n)}=1$. Moreover, we will prove the estimate in $B_{1/64}$ instead of $B_{1/2}$. The general estimate follows directly by scaling and a standard covering argument.

Let $L_\ind$ be one of the operators used in the infimum in \eqref{e:main}. We know from Lemma \ref{l:eachLaBounded} that $L_\ind$ is bounded in $B_{1/2}$ by a constant $C$. In particular $\norm{L_\ind u}_{L^2(B_{1/2})} \leq C$. From Theorem \ref{t:linearL2estimate-local}, we have an $L^2$ estimate for every $L \in \LI_0$,
\[ \norm{L u}_{L^2(B_{1/4})} \leq C. \]

We split the integral of $Lu$ into two domains
\[ Lu(x) = \int_{B_{1/2}} \si u(x,y) K(y) \dd y + \int_{\R^n \setminus B_{1/2}} \si u(x,y) K(y) \dd y. \]

The second integral is clearly bounded since $K(y)$ is a function in $L^1(\R^n \setminus B_{1/2})$. Thus we still have an estimate in $L^2$ for the first term.
\[ \norm{ \int_{B_{1/2}} \si u(x,y) K(y) \dd y }_{L^2(B_{1/4})} \leq C \]

On the other hand, from Lemma \ref{l:truncatedLisSubsolution}, 
\[ \Mp_2 \left( \int_{B_{1/2}} \si u(x,y) K(y) \dd y \right) \geq 0 \]

For a bump function $c(x)$ such that
\begin{align*}
\supp c &= B_{1/4} \\
c &\equiv 1 \ \text{in } B_{1/8} 
\end{align*}
we define
\[ w(x) := c(x) \int_{B_{1/2}} \si u(x,y) K(y) \dd y \]

We know that $w \in L^2(\R^n)$ and $w = 0$ outside $B_{1/4}$. In particular $w$ is bounded in the weighted $L^1$ space: $L^1((1+|y|^{n+\sigma})^{-1})$ needed for Theorem \ref{t:weakharnack2}.

We estimate $\Mp_2 w \geq -C$ in $B_{1/16}$ now in the same way as in the proof of Lemma \ref{l:eachLaBounded} (using assumption \eqref{e:kernelc2}). Thus we can apply theorem \ref{t:weakharnack2} to $w$ to obtain that $w \leq C$ in $B_{1/32}$, which naturally implies $L^k u \leq C$ in $B_{1/32}$. Passing to the limit we get that $Lu \leq C$ for any $L \in \LI_0$.

We got the desired bound from above only. In order to get the corresponding bound from below we must use the equation.

Recall that $L_\ind u$ is bounded by Lemma \ref{l:eachLaBounded}, and the formulas of $L_\ind$ and $L$ are given by
\begin{align*}
L_\ind u(x) = \int \si u(x,y) K_\ind(y) \dd y \\
L u(x) = \int \si u(x,y) K(y) \dd y
\end{align*}
where both $K$ and $K_\ind$ are bounded below by $(2-s)\lambda/|y|^{n+\sigma}$ and above by $(2-s)\Lambda/|y|^{n+\sigma}$.

Consider the kernel \[K_d = \frac 2 \lambda K_\ind - \frac 1 \Lambda K\]
and the corresponding linear operator $L_d$. The kernel $K_d$ satisfies the ellipticity conditions $(2-\sigma)/|y|^{n+\sigma} \leq K_d \leq (2-\sigma)(2\Lambda/\lambda - \lambda/\Lambda)/|y|^{n+\sigma}$, so $L_d$ is in the class $\LI_0$ with ellipticity constants $1$ and $(2\Lambda/\lambda - \lambda/\Lambda)$. The same proof as above tells us that $L_d u \leq C$ in $B_{1/32}$. But then since $L_\ind$ is bounded, then we obtain a bound below for $L$ in $B_{1/32}$.
\[ Lu = 2 \frac \Lambda \lambda L_\ind - \Lambda L_d \geq -C \]

Thus, we have both bounds and we obtain $|\norm{Lu}| \leq C$ in $B_{1/32}$.
\end{proof}

\begin{cor}\label{c:MpAndMmBounded}
$\Mp_0 u$ and $\Mm_0 u$ are bounded in $B_{1/2}$. 
\end{cor}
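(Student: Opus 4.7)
The plan is to read off this corollary directly from Lemma \ref{l:everyLisBounded}. That lemma produces a constant $C$, depending only on $n$, $\lambda$, $\Lambda$, and $\sigma_0$, such that $|Lu(x)| \leq C \norm{u}_{L^\infty}$ for every $x \in B_{1/2}$ and every $L \in \LI_0$. Since
\[ \Mp_0 u(x) = \sup_{L \in \LI_0} Lu(x), \qquad \Mm_0 u(x) = \inf_{L \in \LI_0} Lu(x), \]
taking the supremum and infimum over $\LI_0$ preserves the two-sided bound, so both $|\Mp_0 u|$ and $|\Mm_0 u|$ are controlled by $C \norm{u}_{L^\infty}$ on $B_{1/2}$.

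The only point worth checking is that the constant $C$ produced in Lemma \ref{l:everyLisBounded} really is uniform across $L \in \LI_0$. Tracing through the ingredients used there, namely the bound on $L_\ind u$ from Lemma \ref{l:eachLaBounded}, the $L^2$ estimate of Theorem \ref{t:linearL2estimate-local}, the subsolution property supplied by Lemma \ref{l:truncatedLisSubsolution}, the weak mean value result Theorem \ref{t:weakharnack2}, and finally the auxiliary operator $L_d$ built from $K_d = (2/\lambda) K_\ind - (1/\Lambda) K$ to convert the one-sided bound into a two-sided bound, each of these estimates depends only on $n$, $\lambda$, $\Lambda$, and $\sigma_0$, not on the specific kernel $K$ of $L$. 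There is no genuine obstacle; the corollary is just a repackaging of Lemma \ref{l:everyLisBounded} in the extremal-operator notation.
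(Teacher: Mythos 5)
Your argument is correct and is essentially the same as the paper's one-line proof: both simply observe that $\Mp_0 u = \sup_{L \in \LI_0} Lu$ and $\Mm_0 u = \inf_{L \in \LI_0} Lu$, so the uniform two-sided bound $|Lu| \leq C\norm{u}_{L^\infty}$ from Lemma \ref{l:everyLisBounded} passes to the supremum and infimum. Your extra check that the constant in Lemma \ref{l:everyLisBounded} is genuinely independent of the choice of $L \in \LI_0$ is the right thing to verify, and it matches the paper's implicit reasoning.
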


\begin{proof}
Since $\Mp_0 u = \sup_{L \in \LI_0} Lu$ and for every $L$ in $\LI_0$ we have $|L u| \leq C \norm{u}_{L^\infty}$ with $C$ independent of the choice of $L$ in $\LI_0$, then also $|\Mp_0 u| \leq C \norm{u}_{L^\infty}$ with the same constant $C$.
\end{proof}

Now we are going to prove that all integrals in \eqref{e:main} are absolutely convergent. This already implies that the solution is classical in some way.

\begin{thm} \label{t:MabsBounded}
Assume every $L_\ind$ in \eqref{e:main} belongs to the class $\LI_2$. If $u$ is a bounded function in $\R^n$ such that $Iu=0$ in $B_1$ in the viscosity sense, then we have the following estimate
\[ \int_{\R^n} |\si u(x,y)| \frac{(2-s)}{|y|^{n+\sigma}} \dd y \leq C \norm{u}_{L^\infty(\R^n)} \ \text{ in } B_{1/2}\]
\end{thm}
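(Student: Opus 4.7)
The plan is to observe that the integral in the statement is, up to a positive multiplicative constant, exactly the difference $\Mp_0 u - \Mm_0 u$. The theorem then follows immediately from Corollary \ref{c:MpAndMmBounded}.

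Specifically, I would start from the pointwise formulas for the extremal operators in Section \ref{s:main} (with the factor $(2-\sigma)$ implicit in the display). Since $(\si u(x,y))^+$ and $(\si u(x,y))^-$ are supported on disjoint sets in $y$, one computes
\[
\Mp_0 u(x) - \Mm_0 u(x) = (2-\sigma)\int_{\R^n} \frac{(\Lambda-\lambda)\bigl[(\si u(x,y))^+ + (\si u(x,y))^-\bigr]}{|y|^{n+\sigma}} \dd y = (2-\sigma)(\Lambda-\lambda)\int_{\R^n} \frac{|\si u(x,y)|}{|y|^{n+\sigma}} \dd y.
\]
Corollary \ref{c:MpAndMmBounded} then gives $|\Mp_0 u(x)|,\ |\Mm_0 u(x)| \leq C\norm{u}_{L^\infty}$ for $x \in B_{1/2}$, and dividing by $(\Lambda-\lambda)>0$ yields the bound of the theorem with constant $2C/(\Lambda-\lambda)$.

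The main obstacle is not in this last step but in Corollary \ref{c:MpAndMmBounded} itself, which has already been proved; once both extremal operators are known to be bounded by $\norm{u}_{L^\infty}$, the theorem is an exercise in the definitions. Two minor points deserve a comment. First, a priori $\Mp_0 u(x)$ is only guaranteed to be well-defined as an extended real number; however, its finiteness (from the corollary) together with the fact that its integrand splits into a nonnegative and a nonpositive part on disjoint sets forces both $\int (\si u)^+ /|y|^{n+\sigma}\dd y$ and $\int (\si u)^-/|y|^{n+\sigma}\dd y$ to be finite separately, so the identity above is a genuine equality of real numbers. Second, the argument implicitly uses $\Lambda>\lambda$; in the degenerate case $\Lambda=\lambda$ the class $\LI_0$ collapses to multiples of the fractional Laplacian and the equation reduces to a linear one, so the estimate follows directly from the linear theory of Section \ref{s:linearTheory}.
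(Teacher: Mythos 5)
Your proposal is correct and follows essentially the same route as the paper: both arguments reduce the theorem to the boundedness of extremal-type operators in $\LI_0$ established earlier, followed by an elementary algebraic identity for the integral of $|\si u|$. The only cosmetic difference is that you use the combination $\Mp_0 u - \Mm_0 u$ directly, whereas the paper pairs $\Mp_0 u$ with the fractional Laplacian (via Lemma~\ref{l:everyLisBounded}) and subtracts to isolate $\int (\si u)^+$ and $\int (\si u)^-$ separately; your version is arguably a touch cleaner, and your remark about finiteness in the degenerate case $\Lambda=\lambda$ is a sensible precaution, though that case is excluded by the standing assumption $\lambda<\Lambda$.
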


\begin{proof}
Applying Lemma \ref{l:everyLisBounded} to $L = -(-\lap)^{\sigma/2}$ we get
\[ |-(-\lap)^{\sigma/s} u(x)| = \abs{\int_{\R^n} \si u(x,y) \frac{(2-s)}{|y|^{n+\sigma}} \dd y} \leq C  \norm{u}_{L^\infty(\R^n)} \ \ \text{in } B_{1/2}.\]

On the other hand, applying Corollary \ref{c:MpAndMmBounded} with any pair $\lambda < \Lambda$
\[ |\Mp_0 u(x)| = \abs{\int_{\R^n} (\Lambda \si u(x,y)^+ - \lambda \si u(x,y)^-) \frac{(2-s)}{|y|^{n+\sigma}} \dd y} \leq C \norm{u}_{L^\infty(\R^n)} \ \ \text{in } B_{1/2}. \]

Subtracting, we obtain
\begin{align*}
 \Mp_0 u(x) + \lambda (-\lap)^{\sigma/2} u(x) &\leq C  \norm{u}_{L^\infty(\R^n)}\\
 (\Lambda - \lambda) \int_{\R^n} \si u(x,y)^+ \frac{(2-s)}{|y|^{n+\sigma}} \dd y &\leq C \norm{u}_{L^\infty(\R^n)}
\end{align*} 

On the other hand, by subtracting $\Lambda (-\lap)^{\sigma/2} u(x) - \Mp_0 u(x)$ we obtain the bound

\[ (\Lambda - \lambda) \int_{\R^n} \si u(x,y)^- \frac{(2-s)}{|y|^{n+\sigma}} \dd y = \Lambda (-\lap)^{\sigma/2} u(x) - \Mp_0 u(x) \leq C  \norm{u}_{L^\infty(\R^n)}\]

Combining the two estimates above, we finish the proof.
\end{proof}

\section{Outline of the strategy: the second order case.}
Theorem \ref{t:MabsBounded} provides an estimate slightly stronger than $u \in C^\sigma$. In the case $\sigma \to 2$ it becomes an estimate of the $C^{1,1}$ norm of $u$. Comparing with the proof of Evans-Krylov theorem (as in \cite{MR649348}, \cite{MR661144} or \cite{CC}), the underlying strategy of the proof up to this point is essentially the same but adapted to the integro-differential setting using the ideas in our previous papers \cite{CS} and \cite{CS2}.

The next step in the proof of our main result is to pass from this $C^\sigma$ estimate to a $C^{\sigma+\alpha}$ estimate. In the second order case it corresponds to the apriori estimate $\norm{u}_{C^{2,\alpha}(B_{1/2})} \leq C \norm{u}_{C^{1,1}(B_1)}$. The presently known proofs of this apriori estimate seem difficult to adapt to the nonlocal setting. Thus we present a different strategy for the proof. The key tools that the proof is based on are similar, but in our approach they are organized differently and arguably more directly. We plan to publish a short note \cite{cs-toappear} focusing only on this new proof for concave second order elliptic equations.

In order to better understand our proof in the next section, we first sketch its adaptation to the second order case. 

\newcommand{\ee}{\sigma}

We consider a $C^2$ solution of a fully nonlinear equation $F(D^2 u)=0$ with $F$ concave and uniformly elliptic. We transform this into an integral equation by pointing out that a linear equation can be written as an integral on the unit sphere $S_1$.
\[ a_{ij} \partial_{ij} u(x) = \int_{S_1} \partial_{\ee \ee} u(x) \ w(\ee) \dd \ee \]
with the weight $w(\ee) = 1/(\det \{a_{ij}\} a^{ij} \ee_i \ee_j)$ where $\{a^{ij}\} = \{a_{ij}\}^{-1}$. If the coefficients $a_{ij}$ are uniformly elliptic, then $w(\ee)$ will be bounded away from zero.

We recall that $F$ being concave implies that pure second derivatives are all subsolutions of the linearized operator. In particular, for any fixed set $A \subset S_1$, so is
\[ v_A = \int_A \partial_{\ee \ee} u(x) \dd \ee. \]

We also recall that for a (non necessarily concave) fully nonlinear equation
\[F(D^2 u) = \sup_b \inf_\ind L_{\ind b} u = 0,\] a solution $u$ satisfies (just because it is an $\inf \sup$) that for any two points $x$ and $y$ in the domain, there exists an operator $L_{\ind b}$ for which
\[L_{\ind b} u(x) - L_{\ind b} u(y) \geq 0. \]
In our approach, this means that there is a weight $w(\ee)$, bounded below and above depending on the ellipticity constants, such that
\[ \int_{S_1} \left( \partial_{\ee \ee} u(x) - \partial_{\ee \ee} u(y) \right) w(\ee) \dd \ee \geq 0 . \]
In particular, since there is another weight which gives the same inequality exchanging $x$ and $y$, we must have that the following quantities are comparable:
\begin{equation}  \label{e:2ndOrderComparable}
 \int_{S_1} \left( \partial_{\ee \ee} u(x) - \partial_{\ee \ee} u(y) \right)^+ \dd \ee \approx \int_{S_1} \left( \partial_{\ee \ee} u(x) - \partial_{\ee \ee} u(y) \right)^- \dd \ee 
\approx \int_{S_1} \abs{ \partial_{\ee \ee} u(x) - \partial_{\ee \ee} u(y) } \dd \ee
\end{equation}

At this point we define
\begin{align*}
h(x,\ee) &= \partial_{\ee \ee} u(x) - \partial_{\ee \ee} u(0) \\
w_A(x) &= \int_A h(x,\ee) \dd \ee 
\end{align*}
for any set $A \subset S_1$. We will use only the properties above to show that 
\[ \int_{S_1} |h(x,\ee)| \dd \ee \leq |x|^\alpha. \]
The $C^{2,\alpha}$ estimate for $u$ follows easily from this estimate.

By \eqref{e:2ndOrderComparable}, we only need to prove that $w_A(x) \leq C |x|^\alpha$ for every set $A$, since
\[ \int_{S_1} |h(x,\ee)| \dd \approx  \int_{S_1} h(x,\ee)^+ \dd \ee = \sup_A w_A(x) \]

In fact, by renormalization we only need to prove the following lemma.

\begin{lemma}
Assume that for $x$ in $B_1$, for any set $A \subset S_1$, $w_A(x) \leq 1$. Then there is a universal constant $\theta>0$ such that
\[ w_A(x) \leq 1-\theta\]
for any $A \subset S_1$ and $x \in B_{1/2}$.
\end{lemma}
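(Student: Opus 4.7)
I would argue by contradiction: assume $w_{A_0}(x_0) > 1 - \theta$ for some $x_0 \in B_{1/2}$ and $A_0 \subset S_1$, with $\theta > 0$ to be determined. The structural input is that the concavity of $F$, together with Proposition \ref{p:averageIsSubsolution} (averages of subsolutions are subsolutions), makes each $v_A(x) = \int_A \partial_{\ee \ee} u(x)\,\dd\ee$ a subsolution of the linearized Pucci extremal equation, so $w_A = v_A - v_A(0)$ satisfies $\Mp w_A \geq 0$ in $B_1$, and the same holds for $w_{S_1 \setminus A}$.

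Applying the weak Harnack inequality to the nonnegative supersolution $1 - w_{A_0}$ then yields $\|1 - w_{A_0}\|_{L^\eps(B_r(x_0))} \leq C\theta$ for a radius $r$ depending only on the distance from $x_0$ to $\partial B_1$. For $\theta$ small (depending on a chosen $\rho$), this produces a set $E \subset B_r(x_0)$ of essentially full measure on which $w_{A_0} \geq 1 - \rho$, and therefore on which $P(x) := \sup_A w_A(x) \geq 1 - \rho$.

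At each $x \in E$ the comparability \eqref{e:2ndOrderComparable} now forces $N(x) := \int_{S_1} h(x,\ee)^- \,\dd\ee \geq c(1 - \rho)$. Integrating over $E$ and swapping the order of integration by Fubini, one extracts a fixed measurable set $\tilde A \subset S_1$ of positive mass such that $\int_E h(x,\ee)^-\,\dx \geq c|E|$ for every $\ee \in \tilde A$. Averaging over $\tilde A$ produces a single subsolution $w_{\tilde A}$ whose values are bounded above by $-\kappa$ on a subset $\tilde E \subset E$ of universal measure, for some universal $\kappa > 0$. Combining this with the upper bound hypothesis applied to $S_1 \setminus \tilde A$ through the decomposition $w_{\tilde A} + w_{S_1 \setminus \tilde A} = w_{S_1}$, where $|w_{S_1}| \leq CP$ again by \eqref{e:2ndOrderComparable}, this improvement propagates back via a second application of weak Harnack (now to the subsolution $\kappa + w_{\tilde A}$ viewed from the point side) and contradicts $w_{A_0}(x_0) > 1 - \theta$ once $\theta$ is taken small enough relative to the universal $\kappa$ and $\rho$.

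The main technical obstacle I foresee is the quantitative Fubini/pigeonhole step in the third paragraph: the \emph{bad} directions $\{h(x,\cdot) < 0\}$ depend on $x \in E$, so one must carefully balance the smallness of $\rho$ (hence of $\theta$) against the measure of $\tilde A$ and the size of $\kappa$, so that every constant in the closing contradiction is universal and no logarithmic losses appear when transferring between the $L^\eps$ smallness of $1 - w_{A_0}$ and the pointwise comparability \eqref{e:2ndOrderComparable}.
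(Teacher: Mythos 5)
Your opening move (contradiction, apply the $L^\eps$ estimate to the nonnegative supersolution $1-w_{A_0}$ to find a set $E$ of nearly full measure where $w_{A_0}\geq 1-\rho$, hence $P\geq 1-\rho$ and, by comparability, $N\geq c(1-\rho)$) is exactly the paper's start. The gap is in how you propagate this into a contradiction.

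The Fubini/pigeonhole step you flag as the ``main technical obstacle'' is indeed a genuine obstacle, but more to the point it is unnecessary, and the paper avoids it completely. You do not need to extract a new fixed set $\tilde A$ for which $h(x,\cdot)<0$ uniformly over $x\in E$. Instead, use the \emph{complement of the set you already have}: the algebraic identity $w_{A_0}(x)+w_{A_0^c}(x)=\int_{S_1} h(x,\ee)\,\dd\ee = P(x)-N(x)$ gives, for $x\in E$,
\[
 w_{A_0^c}(x)=P(x)-N(x)-w_{A_0}(x)\ \leq\ 1-c(1-\rho)-(1-\rho)\ \leq\ -c/2
\]
once $\rho$ is small. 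So $w_{A_0^c}$, which is itself a subsolution of the linearized extremal equation, is $\leq -c/2$ on a set of nearly full measure in a fixed ball. No selection of a new direction set, and no loss from the $x$-dependence of $\{h(x,\cdot)<0\}$, is required.

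The second missing ingredient is where the contradiction actually lands. The key normalization is $h(0,\ee)=0$, hence $w_A(0)=0$ for \emph{every} $A$. Applying the weak Harnack / $L^1$-to-$L^\infty$ bound for subsolutions (Theorem 4.8(2) in \cite{CC}, the counterpart of Theorem \ref{t:weakharnack2}) to $w_{A_0^c}$, which is $\leq -c/2$ on most of a ball around the origin and $\leq 1$ everywhere, forces $w_{A_0^c}(0)\leq -c'<0$. That contradicts $w_{A_0^c}(0)=0$. Your write-up instead tries to close the loop by ``contradicting $w_{A_0}(x_0)>1-\theta$'' via a second Harnack applied to $\kappa+w_{\tilde A}$, but you never use the normalization at the origin, and it is not explained why that second application lands back at $x_0$ with the needed sign; as written the circle does not close. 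Replacing the Fubini step with the direct use of $A_0^c$, and pointing the final estimate at $x=0$, repairs both issues and reduces the argument to the one in the paper.
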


\begin{proof}[Sketch of the proof]
Suppose that there exists an $x \in B_{1/2}$ where $w_A > 1-\theta$ for some set $A \subset S_1$. We will arrive to a contradiction if $\theta$ is too small.

Since $w_A \leq 1$ in $B_1$ and $w_A$ is a subsolution of the linearized equation, we can apply Theorem 4.8(1) in \cite{CC} (the $L^\eps$ estimate) to $1-w_A$ (this will correspond to Theorem 10.4 in \cite{CS} in the nonlocal case). It follows that we can make
\[ \Omega = \{ w_A(x) \geq 1 - t \theta\} \]
cover almost all $B_{1/4}$ if we choose $t$ large (but independently of $\theta$).

We will now obtain a contradiction by looking at $w_{A^c}$ in $B_{1/4}$.

For every $x$ in $\Omega$, the choice of the set $A$ is almost maximal in the sense that
\[1-t \theta \leq w_A(x) \leq \int_{S_1} h(x,\ee)^+ \dd \ee \leq 1 \]

On the other hand, since
\[\int_{S_1} h(x,\ee) \dd \ee = \int_{S_1} h(x,\ee)^+ \dd \ee - \int_{S_1} h(x,\ee)^- \dd \ee = w_A(x) + w_{A^c}(x) \]
then also
\[ 0 \leq w_{A^c} + \int_{S_1} h(x,\ee)^- \dd \ee \leq t \theta \]

From \eqref{e:2ndOrderComparable}, we know that the integrals of $h^+$ and $h^-$ are comparable. Thus in $\Omega$ we have
\[ w_{A^c} \leq t \theta - C \]
for a constant $C$ depending on $\lambda$ and $\Lambda$. If we choose $\theta$ small, that means that $w_{A^c}$ will be strictly negative in most of $B_{1/4}$. But then applying Theorem 4.8(2) in \cite{CC} (which corresponds to Theorem \ref{t:weakharnack2} in the nonlocal case) we obtain that $w_{A^c}(0) \leq -c$ for some universal constant $c$. This is a contradiction since clearly $w_{A^c}(0)=0$.
\end{proof}

In the integro-differential case, the proof will be slightly lengthier in part because we have to keep track of the truncation error we make every time we localize an integral. We cover the proof in detail in the next section.

\section{Further regularity.}
This section is devoted to fill the gap between Theorem \ref{t:MabsBounded} and Theorem \ref{t:main}.

From Theorem \ref{t:MabsBounded}, we know that
\begin{equation} \label{e:f1}
 \int_{B_{1/2}} |\si u(x,y)| \frac{2-\sigma}{|y|^{n+\sigma}} \dd y \leq C \ \text{ in } B_{1/4}
\end{equation}

Our objective is to show that
\[ \int_{B_{1/2}} |\si u(x,y) - \si u(0,y)| \frac{2-\sigma}{|y|^{n+\sigma}} \dd y \leq C |x|^\alpha \]
for some constant $C$ and $\alpha>0$ and for every $x \in B_{1/4}$. This estimate implies the H\"older continuity of the fractional laplacian $(-\lap)^{\sigma/2}$ from which the $C^{\sigma+\alpha}$ regularity of $u$ follows.

We will consider all kernels $K$ of the form
\[ K_A(y) = \frac{(2-\sigma)}{|y|^{n+\sigma}} \chi_A(y) \]
where $\chi_A(y)$ is the characteristic function of an arbitrary set $A$.

Let $b$ be a bump function as in Lemma \ref{l:locallizedSubsolution}. For each set $A$, we write
\[ w_A(x) = b(x) \int_{B_{1/2}} (\si u(x,y) - \si u(0,y)) K_A(y) \dd y \]

We know that $w_A$ is uniformly bounded from Lemma \ref{l:everyLisBounded}. From Lemma \ref{l:locallizedSubsolution}, we have
\begin{equation}
\Mp_2 w_A \geq -C \ \text{ in } B_{1/4} \text{ uniformly in } A \label{e:eqForW}
\end{equation}

We define the following quantities
\begin{align*}
P(x) &:= \sup_A w_A(x) = b(x) \int_{B_{1/2}} (\si u(x,y) - \si u(0,y))^+ \frac{(2-\sigma)}{|y|^{n+\sigma}} \dd y \\ 
N(x) &:= \sup_A -w_A(x) = b(x) \int_{B_{1/2}} (\si u(x,y) - \si u(0,y))^- \frac{(2-\sigma)}{|y|^{n+\sigma}} \dd y
\end{align*}

Note that $P(x)$ is realized by the set $A = \{ x : \si u(x,y) > \si u(0,y) \}$ and $N(x)$ is realized by the complement of that set.

\begin{lemma} \label{l:PandN}
There is a constant $C$ such that for $x \in B_{1/4}$,
\[ \frac \lambda \Lambda N(x) - C|x| \leq P(x) \leq \frac \Lambda \lambda N(x) + C|x|. \]
\end{lemma}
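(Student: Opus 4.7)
The plan is to use the infimum structure of the equation $Iu=\inf_\ind L_\ind u=0$ to compare $L_\ind u(x)$ and $L_\ind u(0)$ for carefully chosen $\ind$, and then to cash in the ellipticity of $K_\ind$ to translate that comparison into an inequality between $P(x)$ and $N(x)$.

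For the lower bound on $P$, I would fix $x\in B_{1/4}$ and $\eps>0$, and choose $\ind=\ind(\eps)$ such that $L_\ind u(0)<\eps$ (possible because $\inf_\ind L_\ind u(0)=0$). Since $L_\ind u(x)\geq 0$ for every $\ind$, subtracting gives
\[ \int_{\R^n} (\si u(x,y)-\si u(0,y))\,K_\ind(y)\dd y \;\geq\; -\eps. \]
I would split this integral over $B_{1/2}$ and its complement. The interior piece, estimated by applying the ellipticity $(2-\sigma)\lambda/|y|^{n+\sigma}\leq K_\ind\leq (2-\sigma)\Lambda/|y|^{n+\sigma}$ separately to the positive and negative parts of $\si u(x,y)-\si u(0,y)$, is bounded above by $\Lambda P(x)/b(x)-\lambda N(x)/b(x)$. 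Letting $\eps\to 0$, using that $b\equiv 1$ on $B_{1/4}$ (the natural normalization for the cutoff introduced earlier), and dividing by $\Lambda$ produces the left inequality. The symmetric argument, obtained by choosing $\ind'$ with $L_{\ind'}u(x)<\eps$ so that $L_{\ind'}u(0)-L_{\ind'}u(x)\geq -\eps$, yields $\Lambda N(x)-\lambda P(x)\geq -C|x|$ and hence the right inequality.

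The hard part is controlling the tail integral over $\R^n\setminus B_{1/2}$. After using the symmetry $\si u(x,y)=\si u(x,-y)$ and the change of variables $z=x+y$ to absorb $x$ into the kernel, it reduces to estimating
\[ \int_{\R^n} u(y)\bigl(K_\ind(y-x)\chi_{|y-x|\geq 1/2}-K_\ind(y)\chi_{|y|\geq 1/2}\bigr)\dd y \quad\text{and}\quad (u(x)-u(0))\int_{\R^n\setminus B_{1/2}}K_\ind\dd y. \]
The first term is $O(|x|\norm{u}_{L^\infty})$ because $K_\ind\in\LI_2\subset\LI_1$ supplies the gradient bound $|\grad K_\ind(y)|\leq C|y|^{-(n+1+\sigma)}$ on the region where both characteristic functions equal $1$, while the thin annulus where exactly one of them jumps has measure $O(|x|)$ and the kernel is bounded there. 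The second term is $O(|x|)$ by the Lipschitz continuity of $u$ in $B_{1/2}$, which follows from the $C^{1,\alpha}$ estimates already available for solutions of \eqref{e:main} from \cite{CS2}. Both estimates are uniform in $\ind$, so they absorb into the constant $C$ of the lemma; the rest of the argument is bookkeeping of ellipticity constants.
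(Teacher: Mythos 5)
Your proposal is correct and follows essentially the same route as the paper: compare $u$ with its translate $u(x+\cdot)$ (you do this by selecting an $\eps$-near-minimizing $\ind$, the paper by invoking $\Mp_2(u_x-u)(0)\geq 0$ and $\Mm_2(u_x-u)(0)\leq 0$, but these are the same observation), split the resulting integral at $B_{1/2}$, bound the tail by $C|x|$ via the $C^1$ kernel bound plus the thin-annulus contribution, and read off the two inequalities from the ellipticity bounds on the interior piece. Your bookkeeping of the tail is in fact slightly more careful than the paper's (you correctly isolate the $(u(x)-u(0))\int_{B_{1/2}^c}K_\ind$ term, which the paper's displayed identity appears to drop through a typo), and you rightly flag that the argument silently uses $b\equiv 1$ on $B_{1/4}$.
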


\begin{proof}
For some $x \in B_{1/4}$, let $u_x(z) := u(x+z)$. Since $u$ solves the equation \eqref{e:main} in a neighborhood of $x$, then both $u$ and $u_x$ solve \eqref{e:main} in a neighborhood of $0$. Thus $\Mp_2 (u_x - u)(0) \geq 0$ and $\Mm_2 (u_x - u)(0) \leq 0$.

For every kernel $K$ in the family $\LI_2$ we have
\begin{align*}
L(u_x - u)(0) &= \int_{\R^n} (\si u(x,y) - \si u(0,y)) K(y) \dd y \\
&= \int_{B_{1/2}} (\si u(x,y) - \si u(0,y)) K(y) \dd y \\
&\qquad + \int_{\R^n \setminus B_{1/2}} (\si u(x,y) - \si u(0,y)) K(y) \dd y
\end{align*}

Let us analyze the second term in the right hand side.
\begin{align*}
\int_{B_{1/2}^c} &(\si u(x,y) - \si u(0,y)) K(y) \dd y = \int_{\R^n} \si u(0,y) \left(K(y-x) \chi_{B_{1/2}^c} (y-x) +K(y) \chi_{B_{1/2}^c} (y-x)  \right) \dd y \\
&\leq  \int_{\R^n \setminus B_{1/2 + |x|}} |\si u(0,y)| \frac{C}{|y|^{n+\sigma+1}} |x| \dd y 
+ 8 \norm{u}_{L^\infty} \int_{B_{1/2 + |x|} \setminus B_{1/2}}  \frac{\Lambda (2-\sigma)}{|y|^{n+\sigma}} \dd y 
\\&\leq  C |x|
\end{align*}
Therefore, for every kernel $K$ in the family $\LI_2$, we have
\[ \int_{\R^n} (\si u(x,y) - \si u(0,y)) K(y) \dd y \geq \int_{B_{1/2}} (\si u(x,y) - \si u(0,y)) K(y) \dd y + C|x| \]

Taking the supremum we obtain
\[ 0 \leq \Mp_2(u_x-u) \leq \sup_K \int_{B_{1/2}} (\si u(x,y) - \si u(0,y)) K(y) \dd y + C|x| \]

In particular, if we take the suppremum over all kernels $K$ in $\LI_0$ (a larger family), we still have
\[
\sup_{\lambda \frac{(2-\sigma)}{|y|^{n+\sigma}} \leq K \leq \Lambda \frac{(2-\sigma)}{|y|^{n+\sigma}}} \int_{B_{1/2}} (\si u(x,y) - \si u(0,y)) K(y) \dd y  \geq -C|x|
\]
which is the same as $\Lambda P(x) - \lambda N(x)  \geq -C|x|$.

The same computation with $\Mm_2(u_x-u)(0) \leq 0$ provides the other inequality.
\end{proof}

It is important to notice the following relation,
\begin{align*}
 \int_{B_{1/2}} |\si u(x,y) - \si u(0,y)| \frac{2-\sigma}{|y|^{n+\sigma}} \dd y &= \sup_A w_A - \inf_A w_A \\
 &= P(x) + N(x)
\end{align*}

The strategy for proving our regularity result will be to prove that $\sup_{x \in B_r} P(x) \leq Cr^\alpha$. It is enough to prove it for $|x|$ small enough, therefore we can consider a rescaled situation by taking $\bar w_A(x) = \frac 1 C w_A(rx)$ where $C$ is the constant from \eqref{e:f1} and $r$ is small enough so that our estimates become
\begin{align}
&\text{for every $K$: } |w_K| \leq 1 \text{ in } \R^n \label{e:wKbounded} \\ 
&\text{for every $K$: } \Mp_2 w_K \geq -\eps_1 \text{ in } B_1 \label{e:wKsubsolution}  \\
& \frac \lambda \Lambda N(x) - \eps_1 |x|^{1-\eps_1} \leq P(x) \leq \frac \Lambda \lambda N(x) + \eps_1 |x|^{1-\eps_1} \label{e:PandNcontrolled}
\end{align}
for $\eps_1$ arbitrarily small.

\begin{lemma} \label{ultimateLemma}
Assume $\sigma \in (1,2)$. Let $P(x)$ be the function defined above. There is a constant $C$ and $\alpha>0$ such that
\[ P(x) \leq C r^\alpha \]
\end{lemma}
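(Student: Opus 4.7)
The plan is to prove Lemma \ref{ultimateLemma} by an iterated improvement of oscillation. The normalizations \eqref{e:wKbounded}--\eqref{e:PandNcontrolled} reduce the lemma to the following claim: there is a universal $\theta \in (0,1)$ such that $\sup_{B_{1/2}} P \leq 1 - \theta$ whenever $\sup_{\R^n} w_A \leq 1$ for every $A$. Rescaling this gain and iterating on dyadic balls $B_{2^{-k}}$ then yields $P(x) \leq C|x|^\alpha$ with $\alpha = -\log_2(1-\theta) > 0$. The argument is the nonlocal incarnation of the outline presented in the previous section.

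To prove the oscillation improvement, suppose for contradiction that some $x_0 \in B_{1/2}$ satisfies $P(x_0) > 1 - \theta$, and pick $A_0$ with $w_{A_0}(x_0) > 1 - 2\theta$. Then $1 - w_{A_0}$ is non-negative on $\R^n$ and satisfies $\Mm_2(1 - w_{A_0}) \leq \eps_1$ by \eqref{e:wKsubsolution}. Apply the nonlocal $L^\eps$ estimate from \cite{CS} (Theorem 10.4): for any preassigned $\mu > 0$ we can choose $M$ large (universally depending only on $\mu$) so that, for $\eps_1$ and $\theta$ small enough, the set $\Omega := \{w_{A_0} > 1 - M\theta\} \cap B_{1/4}$ satisfies $|\Omega| \geq (1-\mu)|B_{1/4}|$.

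The next step translates this concentration into a pointwise bound on the complementary integral. For every $x$ and every partition $A_0 \sqcup A_0^c = \R^n$ one has the identity
\[
w_{A_0}(x) + w_{A_0^c}(x) = P(x) - N(x),
\]
since $K_{A_0} + K_{A_0^c} = (2-\sigma)|y|^{-n-\sigma}$. Combined with the lower bound $N(x) \geq (\lambda/\Lambda)(1 - M\theta - \eps_1)$ on $\Omega$, coming from \eqref{e:PandNcontrolled}, and $P(x) \leq 1$, this yields
\[
w_{A_0^c}(x) \leq 1 - \tfrac{\lambda}{\Lambda}(1 - M\theta - \eps_1) - (1 - M\theta) \leq -\tfrac{\lambda}{2\Lambda} =: -c_0
\]
on $\Omega$, provided $\theta$ and $\eps_1$ are small compared to $1/M$ and $\lambda/\Lambda$.

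To close the argument, note that $w_{A_0^c}(0) = 0$ directly from the definition (the integrand vanishes at $x = 0$). Apply the nonlocal $L^\eps$ estimate once more, now to the supersolution $v := -w_{A_0^c}$, which satisfies $\Mm_2 v \leq \eps_1$, is globally bounded by $1$, and is supported in $B_{1/2}$ thanks to the cutoff $b$. Since $v \geq c_0$ on the large set $\Omega \subset B_{1/4}$, the estimate (accounting for the negative tail $v^-$ of $v$ as a controlled source term) forces $v(0) \geq c_1 > 0$ for a universal $c_1$, which contradicts $v(0) = 0$. The main obstacle will be this final step: quantifying the tail contribution of $v^-$, which is not globally non-negative, carefully enough that the constants $c_0$ and $c_1$ remain universal and independent of $\eps_1, \theta \to 0$. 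This is the nonlocal analog of the second-order trick where Theorem 4.8(2) of \cite{CC} (i.e., the supersolution weak Harnack) propagates negativity on a large set to the origin.
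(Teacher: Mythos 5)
Your approach has the right conceptual architecture --- a two-sided contradiction argument: the $L^\eps$ estimate concentrates $w_{A_0}$ near its maximum value on most of a ball, the algebraic identity $w_{A_0}+w_{A_0^c}=P-N$ together with the comparability \eqref{e:PandNcontrolled} forces $w_{A_0^c}$ to be strictly negative there, and then the local maximum principle (Theorem~\ref{t:weakharnack2}) propagates that negativity to the origin, contradicting $w_{A_0^c}(0)=0$. This is indeed the same structure as the paper's proof. The gap is in your reduction to a clean single-scale claim.

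The claim ``whenever $\sup_{\R^n} w_A \leq 1$ for every $A$ then $\sup_{B_{1/2}} P \leq 1-\theta$'' does not self-replicate under rescaling, and for a nonlocal equation this is fatal. If $\tilde w_A(x)=(1-\theta)^{-1}w_A(x/2)$, there is no reason $\sup_{\R^n}\tilde w_A \leq 1$: the improvement you proved is only on $B_{1/2}$, so $\tilde w_A$ can be as large as $(1-\theta)^{-1}$ on $|x|>1$. Worse, you also lose the property ``$w_A$ supported in $B_{1/2}$'' on which you lean heavily to control tails in the last step. The paper instead runs the induction on the local estimate $\sup_{B_{r^k}}|P|\leq(1-\theta)^k$, which after rescaling yields the weaker but self-consistent growth bound $\tilde P(x)\leq (1-\theta)^{-1}|x|^\alpha$ outside $B_1$; this growth bound is precisely what is used to tame the nonlocal tail $\int_{|y|>\kappa^{-1}}\tilde w_{A^c}^{+}(\kappa r y)/(1+|y|^{n+\sigma})\,dy$ in the final application of Theorem~\ref{t:weakharnack2}. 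You identify ``quantifying the tail of $v^-$'' as the obstacle, but the obstacle is not a delicate truncation trick at a single scale; it is that you dropped the hypothesis (the growth bound) that makes the tail controllable at all.

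Two smaller points. First, the step ratio cannot be fixed at $r=1/2$: the $L^\eps$ estimate contributes an additive term of size $Cr^\sigma$ in $\bigl(\inf_{B_r}v_A + Cr^\sigma\bigr)^p$, and this must be dominated by $\theta$, so $r$ has to be chosen small depending on $\theta$, as the paper does. Second, in the last step you need Theorem~\ref{t:weakharnack2} (the local maximum principle for subsolutions, applied to $(\tilde w_{A_0^c}+c_0)^+$ or equivalently viewed as propagation of negativity), not the density estimate of Theorem~10.4 of \cite{CS}; calling it ``the $L^\eps$ estimate once more'' conflates two distinct tools. Your computation that $w_{A_0^c}\leq -\lambda/(2\Lambda)$ on $\Omega$ for $\theta,\eps_1$ small is correct and matches the paper.
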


\begin{proof}
As mentioned above, after an appropriate scaling, we can assume that \eqref{e:wKbounded}, \eqref{e:wKsubsolution} and \eqref{e:PandNcontrolled} hold with $\eps_1$ arbitrarily small. On the other hand, given the construction in Lemma \ref{l:approximationByC2}, we can assume $u$ is $C^2$ and thus $w_K$, $P$ and $N$ are continuous. We will obtain the apriori estimates independently of the modulus of continuity of them, so the estimate holds when passing to the limit.

We will prove that there is $r>0$ and $\theta>0$ such that
\begin{equation} \label{a5}
 \sup_{B_{r^k}} |P| \leq (1-\theta)^k = r^{\alpha k} \text{ where } \alpha = \frac{\log (1-\theta)}{\log r}
\end{equation}

This is clear for $k=0$. Let us prove it is true for all values of $k$ by induction. So let us assume it is true up to some value $k$.

Since \eqref{a5} holds up to some value $k$, we have that,
\[ |w_A(x)| \leq (1-\theta)^{-1} |x|^\alpha \text{ for } |x| > r^k \]

Consider the following rescaled functions
\begin{align*}
\tilde w_A(x) &=  (1-\theta)^{-k} w_A(r^k x)\\
\tilde P(x) &=  (1-\theta)^{-k} P(r^k x) = \sup_{A} \tilde w_A (x) \\
\tilde N(x) &=  (1-\theta)^{-k} N(r^k x) = \sup_{A} -\tilde w_A (x)
\end{align*}

The function $\tilde P$ satisfies the relations
\begin{align*}
\tilde P(x) &\leq 1 \qquad \text{ in } B_1 \\
\tilde P(x) &\leq (1-\theta)^{-1} |x|^\alpha \qquad \text{outside } B_1
\end{align*}

Moreover, from \eqref{e:PandNcontrolled},
\begin{equation}\label{e:pyn}
\lambda \tilde N(x) - \eps_1 \leq \tilde P(x) \leq \Lambda \tilde N(x) +\eps_1
\end{equation}

We want to show that if $\theta$ and $r$ are chosen small enough we will have $\tilde P \leq (1-\theta)$ in $B_r$. The proof is by contradiction. We will arrive to a contradiction if $\theta$ and $r$ are small enough.

Let $x_0$ be the point where the maximum of $\tilde P$ is achieved in $\overline B_r$ for some $r \in (0,1/2)$. We assume $\tilde P(x_0) \geq 1-\theta$ to get a contradiction. Let $A$ be the set such that $\tilde P(x_0) = \tilde w_A(x_0) \geq 1-\theta$.

Let $v_A = (1-\tilde w_A)^+$. We know that $\inf_{B_r} v_A \geq \theta$. Moreover,
\begin{align*}
\Mm_2 v_A &\leq \Mm_2 (1-\tilde w_A) - \Mm_2 (1-\tilde w_A)^- \\
&\leq -\Mp_2 \tilde w_A - \Mm_2 (1-\tilde w_A)^-\\
&\leq C \ \text{ in } B_{1/2}
\end{align*}
since $\Mp \tilde w_A \geq -\eps_1$ and $(1-\tilde w_A)^- \leq ((1-\theta)^{-1} |x|^\alpha - 1)^+$.

By Theorem 10.4 in \cite{CS}, for some $p>0$ and $r < 1/4$ we have the estimate,
\[ |\{ v_A > t \theta \} \cap B_{2r} | \leq C r^n (\theta + C r^\sigma)^p (t\theta)^{-p} \]
Let us choose $r$ (depending on $\theta$ to be chosen later) so that $C r^\sigma < \theta$. So that we have
\[ |\{ v_A > t \theta \} \cap B_r | \leq C r^n t^{-p} = c t^{-p} |B_r|. \]
Thus, by choosing $t$ large, we will be able to make the measure of the set $\{ v_A > t \theta \} \cap B_r$ a small factor of $|B_1|$ independently of $\theta$. Note that $v_A > t \theta$ is equivalent to $w_A < 1 - t \theta$.

Let $G = \{ v_A \leq t \theta \} \cap B_r$. We know that $|G| \geq (1-c t^{-p})|B_r|$. The set $G$ is also the set where $\tilde w_A \geq 1-t \theta$. On the other hand, since $G \subset B_1$, $\tilde P \leq 1$ in $G$, then $\tilde P - \tilde w_A \leq t \theta$ in $G$. This allows us to estimate the difference between $-N(x)$ and $w_{A^c}$ in $G$, where $A^c$ is the complement of the set $A$.

Clearly $w_A + w_{A^c} = P - N$, then $N+w_{A^c} = P-w_A \leq t \theta$ in $G$. Since $\tilde N(x) \geq \lambda / \Lambda \tilde P(x) - \eps_1$, we have that in $G$
\begin{align*}
\tilde w_{A^c}(x) &\leq -\tilde N(x) + t \theta \\
&\leq -\frac{\lambda}{\Lambda} (1-t \theta) + t \theta + \eps_1 \\
&\leq -\frac{\lambda}{2\Lambda} \qquad \text{if $\theta$ and $\eps_1$ are small enough (depending on $t$)}.
\end{align*}
Consequently, $|\{\tilde w_{A^c} \leq -\frac \lambda {2 \Lambda}\} \cap B_r| \geq (1-ct^{-p}) |B_r|$.

For some small $\kappa>0$, we define $v_c = (\tilde w_A(\kappa r x) + \frac{\lambda}{2\Lambda})^+$. We know $\Mp v_c \geq -\eps_1$ in $B_2$, thus we can apply Theorem \ref{t:weakharnack2} to $v_c(\kappa r x)$ for some small $R>0$ and get
\begin{align*}
v_c(0) &\leq C \eps_1 + C \int_{\R^n} \frac{|v_c(x)|}{1+|y|^{n+\sigma}} \dd y \\
&\leq C \eps_1 + C \int_{|y| \leq \kappa^{-1}} \frac{|v_c(x)|}{1+|y|^{n+\sigma}} \dd y + C \int_{|y|>\kappa^{-1}} \frac{|v_c(x)|}{1+|y|^{n+\sigma}} \dd y \\
\intertext{using that $| \{v_c>0\} \cap B_{\kappa^{-1}} | < C t^{-p} \kappa^{-n}$,}
&\leq C \eps_1 + C \kappa^{-n} t^{-p} + C \int_{|y|>\kappa^{-1}} \frac{\tilde w_A(\kappa r x)^+}{1+|y|^{n+\sigma}} \dd y \\
\intertext{since $r<1$, we can bound the third term independently of $r$,}
&\leq C \eps_1 + C \kappa^{-n} t^{-p} + C \int_{|y|>\kappa^{-1}} \frac{2 (\kappa |y|)^\alpha }{1+|y|^{n+\sigma}} \dd y \\
&\leq C \eps_1 + C \kappa^{-n} t^{-p} + C \kappa^\sigma \\
\end{align*}
So we can choose $\kappa$ and $\eps_1$ so that  $C \eps_1 + C \kappa^\sigma < \lambda / (8\Lambda)$ and then $t$ such that $C \kappa^{-n} t^{-p} < \lambda / (8\Lambda)$. Therefore we got the following estimate
\[ v_c(0) \leq \frac{\lambda}{4\Lambda}. \]
But this means that $\tilde w_K(0) \leq - \frac{\lambda}{4\Lambda}$ which is a contradiction since $\tilde w_K(0)=0$.

The contradiction came from saying that $\tilde P(x_0) \geq (1-\theta)$ for some $x_0$ in $B_r$. Thus $\tilde P < (1-\theta)$ in $B_r$. In the original scale, this means that $P \leq (1-\theta)^{k+1}$ in $B_{r^{k+1}}$, which finishes the inductive step and the proof.
\end{proof}

Using Lemma \ref{ultimateLemma}, we can finally prove Theorem \ref{t:main}.

\begin{proof}[{\bf Proof of Theorem \ref{t:main}}] 
As it was mentioned before, the case $\sigma \leq 1$ is already covered in \cite{CS}, so we prove the case $\sigma \in (1,2)$ only.

Let us consider the fractional laplacian of order $\sigma$
\[ -(-\lap)^{\sigma/2} u (x) = c_\sigma (2-\sigma) \int_{\R^n} \si u(x,y) \frac{1}{|y|^{n+\sigma}} \dd y \]
where the constant $c_\sigma$ remains bounded below and above for $\sigma \in (1,2)$.

Let $b$ be a bump function as in Lemma \ref{l:locallizedSubsolution}. We rewrite the fractional laplacian as
\begin{align*}
 -(-\lap)^{\sigma/2} u (x) &= \int_{\R^n} \si u(x,y) b(y) \frac{c_\sigma (2-\sigma)}{|y|^{n+\sigma}} \dd y + \int_{\R^n} \si u(x,y) (1-b(y)) \frac{c_\sigma (2-\sigma)}{|y|^{n+\sigma}} \dd y \\
 &= w(x) + g(x)
\end{align*}

By Lemma \ref{ultimateLemma}, $|w(x)-w(0)|\leq P(x) + N(x) \leq C \norm{u}_{L^\infty} |x|^\alpha$ for some $\alpha>0$ and $C$ universal constants.

Since $u \in L^\infty$, then $g \in C^\infty$. In particular $g$ is $C^\alpha$. For a universal constant $C$,
\[ |g(x)-g(0)| \leq C \norm{u}_{L^\infty(\R^n)} |x|^\alpha \]

Adding the two estimates above we get that \[|(-\lap)^{\sigma/2} u(x) - (-\lap)^{\sigma/2} u(0)| \leq C \norm{u}_{L^\infty(\R^n)} |x|^\alpha.\]

Therefore by a standard translation of the estimate we obtained that $(-\lap)^{\sigma/2} u \in C^\alpha (B_{1/2})$, with the estimate
\[\norm{(-\lap)^{\sigma/2} u}_{C^\alpha(B_{1/2})} \leq C \norm{u}_{L^\infty(\R^n)}. \]

But if $(-\lap)^{\sigma/2} u \in C^\alpha$. It is a classical result that this implies a corresponding estimate for $u \in C^{\sigma+\alpha}$ (see for example \cite{stein1970sia}). So we finish the proof.
\end{proof}

\begin{remark}
We have not used the homogeneity of $I$ in any proof in this paper. With the same proof we can obtain the same regularity result for equations of the form
\[ \inf_{\ind \in \mathcal{A}} L_\ind u(x) = \inf_{\ind \in \mathcal{A}} \int_{\R^n} (u(x+y) + u(x-y) - 2u(x)) K_\ind (y) \dd y = 0 + b_\ind\]
for a bounded family of real numbers $b_\ind$. This is the general form of a concave uniformly elliptic nonlocal operator of order $\sigma$.

For the estimates, we would have to include the values of $b_\ind$ in the right hand side:
\[ ||u||_{C^{\sigma+\alpha}(B_{1/2})} \leq C (\norm{u}_{L^\infty} + \sup_{\ind} b_\ind) \]
\end{remark}

\begin{remark}
The assumption $u \in L^\infty(\R^n)$ is not sharp. It could easily be replaced in all estimates by $u \in L^1(\R^n,1/(1+|y|^\sigma))$. We kept the $L^\infty$ norm for simplicity of the exposition. 
\end{remark}

\section{Acknowledgments}

Both authors were partially supported by NSF grants. 

Part of this work took place while both authors were visiting the Institute for Advanced Study during the special program in geometric PDE.

\bibliographystyle{plain}   
\bibliography{nonlek}             

\def\polhk#1{\setbox0=\hbox{#1}{\ooalign{\hidewidth
  \lower1.5ex\hbox{`}\hidewidth\crcr\unhbox0}}}
\begin{thebibliography}{10}

\bibitem{abels2007aap}
H.~Abels and M.~Kassmann.
\newblock {An analytic approach to purely nonlocal Bellman equations arising in
  models of stochastic control}.
\newblock {\em Journal of Differential Equations}, 236(1):29--56, 2007.

\bibitem{CS2}
L.~Caffarelli and L.~Silvestre.
\newblock Regularity results for nonlocal equations by approximation.
\newblock Preprint.

\bibitem{CS}
L.~Caffarelli and L.~Silvestre.
\newblock Regularity theory for fully nonlinear integro-differential equations.
\newblock {\em Communications on Pure and Applied Mathematics}, To appear.

\bibitem{CC}
L.~A. Caffarelli and Xavier Cabr{\'e}.
\newblock {\em Fully nonlinear elliptic equations}, volume~43 of {\em American
  Mathematical Society Colloquium Publications}.
\newblock American Mathematical Society, Providence, RI, 1995.

\bibitem{cs-toappear}
Luis Caffarelli and Luis Silvestre.
\newblock On the evans-krylov theorem.
\newblock {\em Preprint}, 2009.

\bibitem{MR649348}
Lawrence~C. Evans.
\newblock Classical solutions of fully nonlinear, convex, second-order elliptic
  equations.
\newblock {\em Comm. Pure Appl. Math.}, 35(3):333--363, 1982.

\bibitem{MR661144}
N.~V. Krylov.
\newblock Boundedly inhomogeneous elliptic and parabolic equations.
\newblock {\em Izv. Akad. Nauk SSSR Ser. Mat.}, 46(3):487--523, 670, 1982.

\bibitem{MR1424785}
J.-L. Menaldi and M.~Robin.
\newblock Ergodic control of reflected diffusions with jumps.
\newblock {\em Appl. Math. Optim.}, 35(2):117--137, 1997.

\bibitem{MR1355243}
R.~Mikulyavichyus and G.~Pragarauskas.
\newblock Classical solutions of boundary value problems for some nonlinear
  integro-differential equations.
\newblock {\em Liet. Mat. Rink.}, 34(3):347--361, 1994.

\bibitem{stein1970sia}
E.M. Stein.
\newblock {\em {Singular integrals and differentiability properties of
  functions}}.
\newblock Princeton university press, 1970.

\end{thebibliography}
\index{Bibliography@\emph{Bibliography}}%
\end{document}